\newcommand{\beq}{\begin{equation}}
\newcommand{\eeq}{\end{equation}}
\newcommand{\bsq}{\begin{subequations}}
\newcommand{\esq}{\end{subequations}}
\newcommand{\bq}{\begin{eqnarray}}
\newcommand{\eq}{\end{eqnarray}}
\newcommand{\bqn}{\begin{eqnarray*}}
\newcommand{\eqn}{\end{eqnarray*}}
\theoremstyle{definition}
\newtheorem{lemma}{Lemma}
\newtheorem{proposition}{Proposition}
\theoremstyle{definition}
\newtheorem{definition}{Definition}
\newtheorem{assumption}{Assumption}
\newcommand{\ychen}[1]{\ifthenelse{\boolean{showcomments}}
        { \textcolor{red}{(YC:  #1)}}{}}
\newcommand{\fliu}[1]{\ifthenelse{\boolean{showcomments}}
        { \textcolor{blue}{(FL:  #1)}}{}}
\begin{document}

%
\title{Buy or Sell? Energy Sharing of Prosumers on Constrained Networks}
%
%
%

\author{Yue Chen,
        Shengwei Mei,~\IEEEmembership{Fellow,~IEEE,}
        Wei Wei, ~\IEEEmembership{Senior Member,~IEEE,}
        Steven H. Low,~\IEEEmembership{Fellow,~IEEE,}\\
        Adam Wierman, ~\IEEEmembership{Member,~IEEE,}
        and Feng Liu,~\IEEEmembership{Senior Member,~IEEE}
}

%
%

\markboth{Journal of \LaTeX\ Class Files,~Vol.~XX, No.~X, Feb.~2019}%
{Shell \MakeLowercase{\textit{et al.}}: Bare Demo of IEEEtran.cls for IEEE Journals}
%



\maketitle

\begin{abstract}
The advent of intelligent agents who produce and consume energy by themselves has led the smart grid into the era of ``prosumer'', offering the energy system and customers a unique opportunity to revaluate/trade their spot energy via a sharing initiative. To this end, designing an appropriate sharing mechanism is an issue with crucial importance and has captured great attention. This paper addresses the prosumers' demand response problem via energy sharing. Under a general supply-demand function bidding scheme, a sharing market clearing procedure considering network constraints is proposed, which gives rise to a generalized Nash game. The existence and uniqueness of market equilibrium are proved in non-congested cases. When congestion occurs, infinitely much equilibrium may exist because the strategy spaces of prosumers are correlated. A price-regulation procedure is introduced in the sharing mechanism, which outcomes a unique equilibrium that is fair to all participants. Properties of the improved sharing mechanism, including the individual rational behaviors of prosumers and the components of sharing price, are revealed. When the number of prosumers increases, the proposed sharing mechanism approaches social optimum. Even with fixed number of resources, introducing competition can result in a decreasing social cost. Illustrative examples validate the theoretical results and provide more insights for the energy sharing research.
\end{abstract}

\begin{IEEEkeywords}
Prosumer, energy sharing network, game theory, generalized Nash equilibrium, supply-demand function bidding
\end{IEEEkeywords}

%
\IEEEpeerreviewmaketitle

\section*{Nomenclature}
\addcontentsline{toc}{section}{Nomenclature}
\subsection{Indices and Sets}
\begin{IEEEdescription}[\IEEEusemathlabelsep\IEEEsetlabelwidth{ssssssss}]
\item[$i$] Index of prosumers.
\item[$k$] Index of resources.
\item[$\mathcal{I}$] Set of prosumers.
\item[$\mathcal{K}$] Set of resources.
\item[$V$] Set of buses.
\item[$E$] Set of edges.
\item[$f_i(\cdot)$] Disutility function of prosumer $i$.
\item[$s_i(\cdot)$] Sharing cost of prosumer $i$.
\item[$\Pi_i(\cdot)$] Total cost function of prosumer $i$.
\item[$X_i$] Action set of player $i$, and $X = \prod_i X_i$.
\end{IEEEdescription}
       
\subsection{Parameters}
\begin{IEEEdescription}[\IEEEusemathlabelsep\IEEEsetlabelwidth{ssssssss}]
\item[$I$] Number of prosumers.
\item[$K$] Number of resources of each prosumer.
\item[$D_i^0$] Fixed amount of energy prosumer $i$ consumes.
\item[$p_i^{k0}$] The amount of energy $i$ generates originally.
\item[$E_i^0$] The amount of energy bought from grid by $i$.
\item[$D_i$] Required load reduction of prosumer $i$.
\item[$a$] Price elasticity of prosumers
\item[$c_i^k$] Coefficients of the disutility function for prosumer $i$.
\item[$\pi_{il}$] Line flow distribution factor from bus $i$ to line $l$.
\item[$F_l$] Flowlimit for line $l$.
\end{IEEEdescription}

\subsection{Decision Variables}
\begin{IEEEdescription}[\IEEEusemathlabelsep\IEEEsetlabelwidth{ssssssss}]
\item[$p_i^k$] Output adjustment of resource $k$ for prosumer $i$.
\item[${\lambda _i}$] Sharing market clearing price for prosumer $i$.
\item[$\lambda_i^c$] Sharing price for prosumer $i$ after regulation.
\item[$b_i$] Willingness to pay/buy of prosumer $i$.
\item[$q_i$] Energy transaction in the sharing market.
\end{IEEEdescription}

\section{Introduction}
%
%
%
%

\IEEEPARstart{T}{he} advances in distributed renewable generation technology amid the cost decline of energy storage units have led to the proliferation of utility-scale wind turbines, rooftop PV panels and energy storages \cite{da2014impact}. Traditional customer is now transferring to the so-called ``prosumer''. Except for their own needs, prosumers can even supply energy to the grid if the production is abundant, and offer great potential for enhancing the efficiency of energy utilization via energy sharing \cite{rathnayaka2011innovative}. In the past decade, the sharing economy has become a heated topic in both industry and academia. Several third-party sharing platforms, such as Uber for ride-sharing \cite{cannon2014uber}, AirBnB for room-sharing \cite{zervas2017rise} and Upwork for workplace-sharing \cite{munoz2017mapping} are now changing the way of life in modern society. Researchers endeavor to characterize the strength and drawback of current sharing schemes and raise more efficient ones.

Numerous empirical studies on sharing have been conducted. A review of the sharing economy was presented in \cite{cheng2016sharing}. Performance of typical sharing platforms is studied in \cite{lam2018more,li2016demand} together with their impact on social welfare. The influence of sharing prices and subsidies were revealed \cite{chen2015peeking}. The key issue in energy sharing is designing a proper mechanism or business model. The desired business model should satisfy the following conditions: 1) The participant has the free choice of supply or consume; 2) The sharing market is \emph{effectively cleared}, which means supply and demand are balanced while the system constraints are not violated; 3) Being fair to all participants. Above empirical studies offer the guidance for designing sharing mechanism. The existing work can be divided into three categories according to the model structure.

\textbf{Stackelberg Game based Sharing.} This category follows a two-level (sequential) decision-making framework. In the upper level, the operator of a sharing platform announces buying and selling prices, aiming at optimizing a certain objective (e.g., profit or social welfare); In the lower level, prosumers/costumers decide the sharing profile in response to the price signal. The two levels constitute a Stackelberg game. As the operator has the priority, this scheme is called ``\emph{Operator-Dominant Sharing}". Researches in this line include optimal pricing \cite{banerjee2016multi}, efficiency in matching \cite{stiglic2015benefits} and equilibrium analysis \cite{Sharing-twosided1}, to name just a few. The pricing strategy for energy sharing among PV prosumers was investigated in \cite{Sharing-setprice1,cui2017distributed}. The existence and uniqueness of the Stackelberg Equilibrium (SE) were proved. The sharing was further categorized into direct sharing (within a single time period) and buffered sharing (across time periods) with the help of energy storage \cite{Sharing-setprice2}. A Stackelberg game between the auctioneer and the residential units was modeled in \cite{tushar2016energy} for energy storage sharing. A two-sided market based sharing pricing problem among renters and owners was studied in \cite{Sharing-twosided1}. Efficiency loss during the tradeoff between revenue and social welfare was bounded. Optimal loyalty program for sharing platform operator was investigated in \cite{fang2018loyalty}, showing that a linear loyalty program works better than the one-time sign-up bonuses. For homogeneous suppliers, a linear loyalty program fits well while for heterogeneity suppliers, a set of multi-threshold linear loyalty programs are implementation-friendly. In this category, sharing participants are price-takers, and the study focuses on the profit of the platform operator to promote the business model.

\textbf{Generalized Nash game based Sharing.} This category follows a single-level (simultaneous) decision-making framework, and the behaviors of participants are modeled as a generalized Nash game (GNG). The sellers bid the production costs while the buyers bid the demands to the market at the same time; then the sharing market is cleared and the sharing price is given. This scheme is called ``\emph{Player-Dominant Sharing}". This kind of studies has a similar structure as the traditional energy market analysis \cite{chen2018energy,chen2018optimal}, but the main difference is that the sharing participants can benefit from either sharing their resources or consuming it themselves. Vehicle-to-vehicle charge sharing was analyzed in \cite{Sharing-twosided3}. A double-sided bidding mechanism in the mobile cloud was proposed to encourage users' participation \cite{tang2017double}. In the above work, roles of the players (a seller or a buyer) are predetermined. With the prevalent of prosumers, it is natural that participants may change their roles over time. The behavior of networked prosumers with different price preferences on the trades with their neighbors was considered under peer-to-peer energy exchange \cite{cadre2018peer}. The economic efficiency of energy hubs management in a multi-energy system under three schemes (individual, sharing, aggregation) was compared in \cite{chen2018analyzing}.

\textbf{Coalition Game based Sharing.} The above two categories of researches adopt noncooperative game theoretical frameworks, while this category employs a cooperative game model. For coalition game based sharing, a set of reallocation rules is pre-determined by the sharing platform and served as prior knowledge; then each player makes his decision taking into account the reallocation he may obtain. The crucial issue in the kind of study is the proper design of the reallocation rule, such that all players are willing to cooperate, forming a coalition game. The famous Vickrey-Clarke-Groves (VCG) mechanism is one of the examples \cite{makowski1987vickrey}. An efficient cost allocation scheme for energy sharing was proposed in \cite{Sharing-reallocation2}. Coalitional game theory based local power exchange algorithm was designed for networked microgrids \cite{mei2019coalitional}. A conceptual design for the demand-side energy resource was provided in \cite{qi2017sharing}, where the coalition surplus is distributed between aggregators and prosumers. The coalition game based sharing can achieve social optimum, but for a complex sharing system, a proper reallocation rule is difficult to obtain.

This work investigates energy sharing from the perspective of prosumers via generalized Nash game, and thus falls into the second category. It tries to set forth an efficient sharing mechanism that allows participants to choose their roles flexibly, and enhances social efficiency while maintaining system security. It possesses some salient features:

1) \textbf{Sharing mechanism considering network constraints.} A generic supply-demand function based sharing mechanism is proposed. It allows prosumers to choose their roles as buyers or sellers freely. Different from previous works focusing on profit maximization, the proposed sharing scheme targets at minimizing the variance of sharing prices and maintaining fairness. When the sharing market is effectively cleared, supply-demand balance and system constraints are met. The model turns out to be a generalized Nash game. The existence and uniqueness of a generalized Nash equilibrium (GNE) in non-congestion cases is proved; an improved sharing mechanism with price regulation is derived for locating a unique GNE when congestion occurs.

2) \textbf{Desired properties of the sharing mechanism.} It is proved that every prosumer has the motivation to take part in sharing by revealing that his cost under sharing is no worse than under individual decision-making. This also means a Pareto improvement is achieved among all prosumers. The sharing price is shown to have a similar structure as the locational marginal price (LMP) in power market analysis, consisting of two parts: one is related to energy balancing and the other is caused by congestion. The proposed sharing mechanism approaches social optimum with an increasing number of participants. Even with a fixed number of resources, introducing competition can help reducing the total social cost.

3) \textbf{Deeper insights into the sharing mechanism.} Several examples are given for a better understanding of the proposed sharing mechanism. A simple case with two prosumers is used to show that the improved sharing mechanism guarantees the existence of a unique GNE. Then, the impact of flow limits is investigated. The proposed sharing mechanism always results in a fair nodal sharing price. It is also revealed that getting more prosumers involved and introducing competition help reduce the total cost. One important factor regarding competition is the location of lines.

The rest of this paper is organized as follows. The mathematical formulations of energy prosumers and description of the intuitive energy sharing mechanism are presented in Section II; Discussions regarding the existence of GNE are given in Section III; An improved sharing mechanism is provided in Section IV to ensure the uniqueness of GNE. Properties of the improved sharing game are revealed in Section V; Some illustrative examples are given in Section VI. Finally, conclusions are summarized in Section VII.

\section{Game Model of Energy Sharing}
\subsection{Energy Prosumers}

A set of prosumers indexed by $i \in \mathcal{I}=\{1,2,...,I\}$ is considered and each of them possesses $K$ resources, i.e., a distributed generator (DG), indexed by $k \in \mathcal{K}=\{1,2,...,K\}$. Let the underlying simple undirected graph be $G=(V,E)$, where $V$ denotes the set of buses and $E$ the set of edges. The degree of a vertex of graph $G=(V,E)$ is defined as the number of edges incident to that vertex. Different from traditional producers or consumers, prosumers can both produce and consume energy. The self-balancing condition of prosumer $i$ is shown below.
\bq
\sum \limits_k p_i^{k0}+E_i^0=D_i^0
\eq  
where $\sum_k p_i^{k0}$ is the energy produced by itself, $E_i^0$ is the energy purchased from the grid, and $D_i^0$ is the load demand it needs to satisfy, which is fixed. We consider all these prosumers taking part in a demand response program and the required load reduction for prosumer $i$ is $D_i$, which means $E_i^0$ should be reduced by $D_i$. If each prosumer makes the decision individually, in other words, it can only adjust its resources output by $p_i^k,\forall k$ to meet the load reduction requirement, we have $\sum_k p_i^k=D_i$. The quadratic function $f_i(p_i)=\sum_k c_i^k(p_i^k)^2$ is adopted to depict the disutility of prosumer $i$. According to Cauchy-Schwarz inequality, the disutility is lower bounded by
\bq
\sum \limits_k c_i^k (p_i^k)^2 \ge \frac{D_i^2}{\sum \limits_k \frac{1}{c_i^k}} \nonumber
\eq

As the disutility varies among different prosumers, the individual decision-making scheme may not be the most efficient one, which means the social total disutility can still be diminished. An intuitive idea is to allow them to trade with each other so that the prosumer with lower marginal disutility can reduce more and buy energy from prosumer with higher marginal disutility, resulting in a win-win game. Regarding energy sharing, there are two main concerns: (1) a proper profit allocation scheme should be designed so that every prosumer is willing to participate; (2) deploying energy transaction might violate energy flow limits of the network. To this end, an effective sharing mechanism that can motivate prosumers to get involved while maintaining system security is necessary.

\begin{assumption}
\label{assumption-1}
 The maximum degree of vertexes of graph $G=(V,E)$ is finite, and without loss of generality, assume the upper bound is $G^D$.
\end{assumption}

\subsection{Generic Supply-Demand Function}
A generic supply-demand function is adopted, allowing prosumers to flexibly alter his role either as a seller or a buyer. The supply-demand function for prosumer $i$ is
\bq 
q_i= -a \lambda_i+b_i 
\eq
where $q_i$ is the clearing amount of prosumer $i$ ($q_i>0$ for buyers and $q_i<0$ for sellers); $\frac{1}{a}>0$ is the price sensitivity of the sharing market, which shows how the bids of prosumers influence the sharing price; $\lambda_i$ is the clearing price for prosumer $i$; $b_i$ is the bid of prosumer $i$. The sharing market is said to be cleared effectively when the following two conditions are met:

(1) The net quantity $\sum_i q_i=0$, which means
\bq
\label{eq:clearing}
\sum \limits_{i \in \mathcal{I}} (-a\lambda_i+b_i)=0
\eq

(2) The energy transaction pattern corresponding to the market clearance is deliverable, which means there exists a feasible power flow solution given the offers and demands of prosumers. Here, direct current (DC) model is used to calculate the power flow in each line $l \in \mathcal{L}=\{1,2,...,L\}$. Since power balancing is ensured by (\ref{eq:clearing}), we only need to impose the following flow limit constraints
\bq
\label{eq:flowlimits}
-F_l \le \sum \limits_{i \in \mathcal{I}} \pi_{il} (-a\lambda_i+b_i) \le F_l,\forall l \in \mathcal{L}
\eq  

The sharing platform clears the market subject to constraints \eqref{eq:clearing} and \eqref{eq:flowlimits}. In view of possible congestions, the sharing price $\lambda_i$ may vary at different buses. To give a fair result, the objective of the market clearance problem is to minimize the variance of the sharing prices $\lambda_i,\forall i$. 
\begin{equation}
\label{eq:price-var}
\min~ \frac{1}{I} \sum_i \left(\lambda_i - \frac{\sum\nolimits_j \lambda_j}{I}\right)^2
\end{equation}
In consideration of (\ref{eq:clearing}), by ignoring the constant terms, the objective function can be simplified into $\sum_i \lambda_i^2/I$.

\subsection{Energy Sharing Mechanism}
Under the settings described above, the energy sharing market considering network constraints is operated following three steps and shown in Fig. \ref{fig:procedure}.
\begin{figure}[!t]
\centerline{\includegraphics[width=0.6\columnwidth]{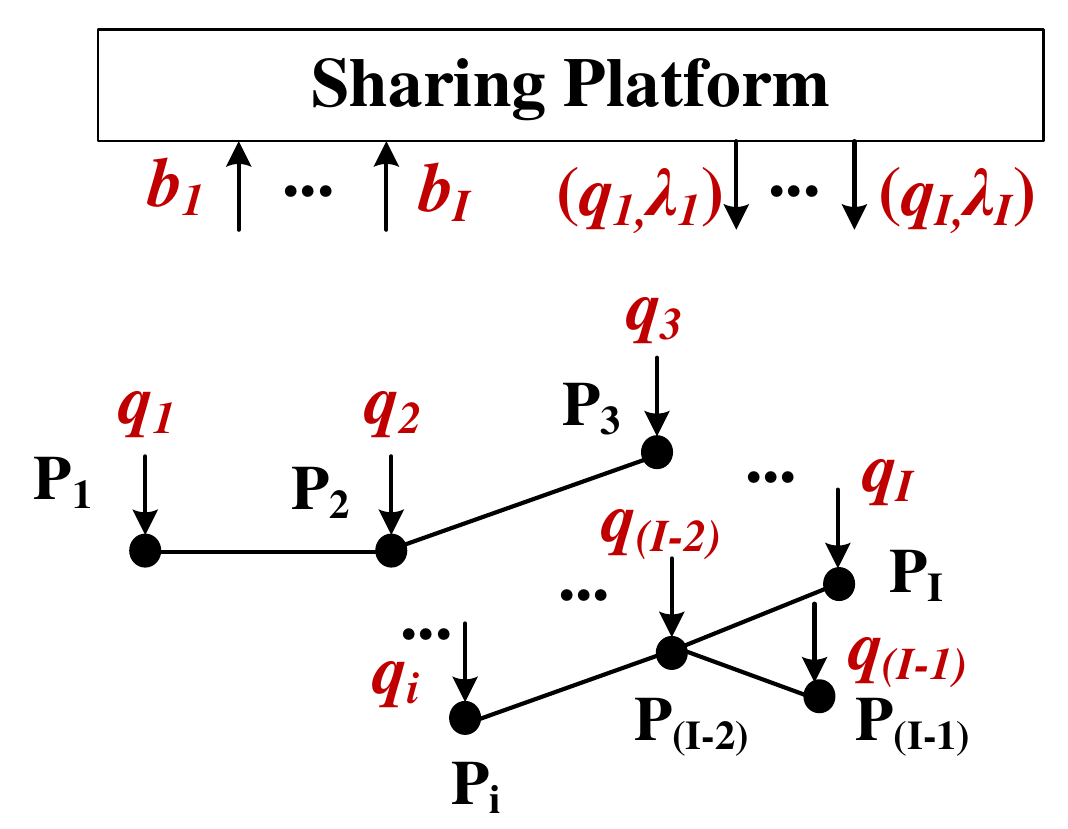}}
\setlength{\abovecaptionskip}{0pt}
\caption{Procedure of the sharing mechanism.}
\setlength{\belowcaptionskip}{-5em}
\label{fig:procedure}
\vspace{-1em}
\end{figure}

\textbf{Step 1:} Estimate the value of price sensitivity factor $a$ via historical data. Each prosumer submits a bid $b_i$ to the sharing platform, showing his willingness to buy.

\textbf{Step 2:} The sharing platform effectively clears the market by solving problem \eqref{eq:platform}. Then the sharing price $\lambda_i$ and clearing amount $q_i$ are sent back to prosumer $i$.
\bsq
\label{eq:platform}
\bq
\mathop {\min }\limits_{\lambda_i,\forall i} && \frac{1}{I}\sum \limits_i \lambda_i^2  \\
\rm{s.t.}&& \sum \limits_{i} (-a\lambda_i+b_i)=0  : \eta \\
&& -F_l \le \sum \limits_{i} \pi_{il} (-a\lambda_i+b_i) \le F_l: 
\alpha_l^{-},\alpha_l^{+}, \forall l
\eq
\esq
where $\eta$, $\alpha_l^{-}$, and $\alpha_l^{+}$ following a colon are dual variables. 
 
\textbf{Step 3:} The clearing amount of each prosumer is given by $q_i(b)=-a\lambda_i(b)+b_i$. If $q_i(b)>0$, he purchases energy from the sharing market and the payment is $\lambda_iq_i(b)=\lambda_i(-a\lambda_i+b_i)$; otherwise, if $q_i(b)<0$, he sells energy to the sharing market and the revenue is $-\lambda_iq_i(b)=-\lambda_i(-a\lambda_i+b_i)$. 

The sharing price $\lambda_i,\forall i$ is not neccessary to be positive. A intuitive example is that suppose all $c_i^k,\forall i,k$ are equal and $D_i<0,\forall i$, which means the prosumers are required to produce less (or consume more). If there is no congestion, it is easy to prove that $\lambda_i<0,\forall i$. At this time, a positive $q_i$ can be regarded as selling a positive load to other prosumer, and it is reasonable that it gets money from the platform.

\subsection{Energy Sharing as A Generalized Nash Game}
Given the rule of market clearing, a prosumer can foresee the clearing result and determine the optimal bidding strategy. In particular, the problem of prosumer $i \in \mathcal{I}$ becomes
\begin{equation}
\label{eq:sharing-upper}
\mathop {\min} \limits_{p_i^k,\forall k,b_i} \left\{  \Pi_i(p_i,b_i,b_{-i}) \middle| \sum \limits_k p_i^k-a\lambda_i(b)+b_i=D_i :\beta_i \right\} 
\end{equation}
where the objective function
\begin{equation}
\Pi_i(p_i,b_i,b_{-i}) = \sum \limits_k c_i^k (p_i^k)^2+\lambda_i(b)(-a\lambda_i(b)+b_i) \label{eq:Prosumer-Obj} 
\end{equation}
consisting of the disutility $f_i(p_i):=\sum_k c_i^k(p_i^k)^2$ and the sharing cost $s_i(b):=(-a\lambda_i+b_i)\lambda_i$. The constraint in \eqref{eq:sharing-upper} is the energy balancing condition and $\beta_i$ is the corresponding dual variable. $\lambda_i$ is determined from the sharing market clearing problem \eqref{eq:platform}. 

In summary, the game among all prosumers consists of the following elements:

 1) the set of prosumers $\mathcal{I}=\{1,2,...,I\}$; 

 2) action sets $X_i(b_{-i})$\footnote{The subscribe $-i$ means all players in $\mathcal{I}$ except $i$ },$\forall i$, and strategy space  $X = \prod_i X_i$; 

3) Payoff functions $\Pi_i(p_i(b), b_i, b_{-i}), \forall i$. 

For simplicity, denote by $\mathcal{G}=\{\mathcal{I},X,\Pi\}$ the abstract form of the sharing game \eqref{eq:platform}-\eqref{eq:sharing-upper}. Because the price $\lambda_i$, which appears in the strategy set $X_i$, depends on the joint action $b_{-i}$ of other prosumers, the strategy sets $X_i(b_{-i})$, $i=\{1,\cdots I\}$ of individual players are correlated. Therefore, the game $\mathcal{G}=\{\mathcal{I},X,\Pi\}$ belongs to the category of GNG. This is different from a standard Nash game in which correlation only appears in the payoff functions.

\section{  Equilibrium  of the Intuitive Sharing Game}
\subsection{Generalized Nash Equilibrium}
In this subsection, the properties of the equilibrium of the proposed sharing market are revealed. First, the definition of a GNE is given as follows.

\begin{definition}
A strategy profile\footnote{Given a collection of $x_i$ for $i$ in a certain set $A$, $x$ denotes the vector $x := (x_i; i\in A)$ of a proper dimension with $x_i$ as its components} $ b^* \in X $ is a Generalized Nash Equilibrium (GNE) of the sharing game $\mathcal{G}=\{\mathcal{I},X,\Pi\}$ defined by  \eqref{eq:platform}-\eqref{eq:sharing-upper}, if 
\bq
\Pi_i(p_i^*(b^*), b_i^*, b_{-i}^*) \le \Pi_i(p_i(b_i,b_{-i}^*), b_i, b_{-i}^*), \forall b_i \in X_i(b_{-i}^*), \forall i \nonumber
\eq
\end{definition}

Given the energy production schedule $p$, define 
\begin{equation*}
\tilde \lambda_i(p_i):=2c_i^kp_i^k+\frac{\sum_k p_i^k-D_i}{a(I-1)}
\end{equation*}
and 
\begin{equation*}
\tilde b_i(p_i):=D_i-\sum_k p_i^k+a\tilde \lambda_i(p_i)
\end{equation*}
we have the following proposition.
\begin{proposition}
\label{Thm:prop-1}
If the sharing game $\mathcal{G}=\{\mathcal{I},X,\Pi\}$ has an isolated GNE (IGNE, which means that no other GNE exists in a small enough neighborhood of the given GNE), then it is also unique. Denote by $ b^*$ the unique IGNE and $p^*$ the corresponding optimal adjustment, then $b_i^*=\tilde b_i(p_i), \forall i \in \mathcal{I}$, and $p=[p_i^*], \forall i$ is the unique solution of the following optimization problem:
\bsq
\label{eq:central}
\bq
\mathop {\min} \limits_{p_i^k,\forall i,k} && \sum \limits_i \sum \limits_k c_i^k(p_i^k)^2+\sum \limits_i \frac{(D_i-\sum_k p_i^k)^2}{2a(I-1)} \label{eq:central.1}\\
\rm{s.t.} && \sum \limits_i \sum \limits_k p_i^k =\sum \limits_i D_i : \kappa \label{eq:central.2}\\
&& -F_l \le \sum \limits_i \pi_{il}(D_i-\sum \limits_k p_i^k) \le F_l:\tau_l^{-},\tau_l^{+} \label{eq:central.3}
\eq
\esq
\end{proposition}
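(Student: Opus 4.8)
The plan is to prove that the first-order stationarity conditions that any GNE of $\mathcal{G}$ must satisfy coincide with the KKT system of the strictly convex program \eqref{eq:central}, and then to extract uniqueness from the strict convexity of \eqref{eq:central}. I would assemble three stationarity systems and stitch them together. First I would form the KKT conditions of the platform's clearing problem \eqref{eq:platform}. Substituting $q_i=-a\lambda_i+b_i$ recasts clearing as the Euclidean projection of the bid vector onto the feasible transaction polytope $\{q:\sum_i q_i=0,\ -F_l\le\sum_i\pi_{il}q_i\le F_l\}$, and stationarity in $\lambda_i$ yields the decomposition $\lambda_i=\tfrac{Ia}{2}\big(\eta+\sum_l(\alpha_l^{+}-\alpha_l^{-})\pi_{il}\big)$: a uniform term plus a congestion term. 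This already exhibits the locational structure that must later be matched with the multipliers $\kappa$ and $\tau_l^{\pm}$ of \eqref{eq:central}.

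Next I would write the KKT conditions of a single prosumer's problem \eqref{eq:sharing-upper}. Stationarity in $p_i^k$ gives $\beta_i=-2c_i^k p_i^k$ for every $k$; this equalizes the marginal disutilities $2c_i^k p_i^k$ across $k$ and, in particular, makes $\tilde\lambda_i(p_i)$ well defined. Stationarity in $b_i$, using $q_i=D_i-\sum_k p_i^k$ from the balance constraint of \eqref{eq:sharing-upper}, reduces to $\rho_i q_i+(1-a\rho_i)(\lambda_i+\beta_i)=0$, where $\rho_i:=\partial\lambda_i(b)/\partial b_i$ is prosumer $i$'s marginal price impact. Solving for the realized price gives $\lambda_i=2c_i^k p_i^k+\frac{\rho_i}{1-a\rho_i}\big(\sum_k p_i^k-D_i\big)$.

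The crux — and the step I expect to be the main obstacle — is evaluating $\rho_i$. Differentiating the active clearing KKT system with respect to $b_i$ gives $\rho_i=\tfrac1a (P_M)_{ii}$, where $P_M$ is the orthogonal projector onto $\mathrm{span}\{\mathbf 1,\ \pi_l:l\ \text{active}\}$; when no congestion binds this collapses to $\rho_i=1/(aI)$, so that $\tfrac{\rho_i}{1-a\rho_i}=\tfrac{1}{a(I-1)}$ and the realized price equals exactly $\tilde\lambda_i(p_i)$, whence $b_i^{*}=\tilde b_i(p_i^{*})$. The delicate issue is that $\lambda_i(b)$ is only piecewise affine in $b$, so $\rho_i$ can jump as the active congestion set changes; this is precisely where the \emph{isolated} hypothesis is needed, ruling out the degenerate, non-differentiable configurations that (as the paper notes) otherwise generate a continuum of equilibria, and fixing the effective price impact at the value that forces $\lambda_i=\tilde\lambda_i(p_i)$.

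Finally I would substitute $\lambda_i=\tilde\lambda_i(p_i^{*})$ and $b_i^{*}=\tilde b_i(p_i^{*})$ back into the clearing conditions of \eqref{eq:platform}: the balance constraint becomes $\sum_i\sum_k p_i^k=\sum_i D_i$, i.e. \eqref{eq:central.2}; the flow constraints become the line limits on $\sum_i\pi_{il}(D_i-\sum_k p_i^k)$, i.e. \eqref{eq:central.3}; and the price decomposition becomes the stationarity condition of \eqref{eq:central}, after identifying $\eta,\alpha_l^{\pm}$ with $\kappa,\tau_l^{\pm}$. Hence $p^{*}$ satisfies the KKT conditions of \eqref{eq:central}. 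Because the objective \eqref{eq:central.1} is strictly convex in $p$ — the term $\sum_i\sum_k c_i^k(p_i^k)^2$ is strictly convex and the coupling term $\sum_i (D_i-\sum_k p_i^k)^2/(2a(I-1))$ is convex — over the convex feasible set, the minimizer $p^{*}$ is unique. Consequently $b^{*}=\tilde b(p^{*})$ is uniquely determined, so every isolated GNE must coincide with this single point and the IGNE, if it exists, is unique.
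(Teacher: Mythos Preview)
Your overall architecture---assembling the first-order conditions of the clearing problem and of each prosumer's problem, identifying them with the KKT system of the strictly convex program \eqref{eq:central}, and reading off uniqueness from strict convexity---is exactly the paper's strategy. The divergence is in how you obtain the prosumer's stationarity condition in $b_i$.

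The paper does not compute a sensitivity $\rho_i=\partial\lambda_i/\partial b_i$. Instead it substitutes the full KKT system of the clearing problem \eqref{eq:platform} into prosumer $i$'s problem, treating the lower-level optimality relations as constraints with their own multipliers; this makes each prosumer's problem an MPEC and the game an EPEC. It then writes the \emph{strong stationarity} conditions of the MPEC (introducing multipliers $\epsilon_j,\delta,\gamma_l^\pm,\phi_l^\pm,\rho_l^\pm$ on the lower-level KKT equations) and, by a purely algebraic combination of these conditions---summing the stationarity equations in $\lambda_j$ over all $j$ and using $\sum_j\epsilon_j=0$---derives
\[
\lambda_i^*=2c_i^kp_i^{k*}+\frac{\sum_kp_i^{k*}-D_i}{a(I-1)}
\]
\emph{regardless} of which congestion constraints are active. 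No differentiation of $\lambda_i(b)$ is performed, and no uncongested assumption is used; the identification $\kappa=\tfrac{aI}{2}\eta^*$, $\tau_l^\pm=\tfrac{aI}{2}\alpha_l^{*\pm}$ then matches \eqref{eq:central-KKT}.

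Your chain-rule route, by contrast, only yields $\rho_i=1/(aI)$ when the active set is $\{\mathbf 1\}$; with binding congestion you correctly get $(P_M)_{ii}/a$, which is generically larger, and then $\tfrac{\rho_i}{1-a\rho_i}\neq\tfrac{1}{a(I-1)}$. You invoke the isolatedness hypothesis to force the right value, but you give no argument for this---and in fact showing that active congestion is incompatible with an IGNE is precisely the content of the paper's \emph{next} result (Proposition~\ref{Thm:prop-2-2}), proved by a separate contradiction argument. So as written your proof has a real gap at exactly the point you flagged: you assert, but do not establish, that at an isolated GNE the effective price impact must be $1/(aI)$. Either supply that argument up front (essentially proving Proposition~\ref{Thm:prop-2-2} first), or---cleaner, and what the paper does---replace the $\rho_i$ computation by the MPEC strong-stationarity system, which produces the coefficient $1/(a(I-1))$ directly without differentiating $\lambda_i(b)$ at all.
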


The proof are given in Appendix A. Proposition \ref{Thm:prop-1} offers a convenient way to identify the IGNE, if one does exist. In what follows, we discuss the existence of the IGNE. 

\subsection{Existence of an Isolated Generalized Nash Equilibrium}

Unlike a standard Nash game whose equilibrium exists and is unique under certain convexity assumptions, a GNG may possess infinitely much equilibrium which are non-isolated, because the flow limit constraints complicate the problem. A simple example is given below. 

\textbf{Example:} Suppose there are two prosumers connecting to the head bus and the tail bus of a line; each of them controls one resource. Let $a=1$, and then the market clearing problem is
\bsq
\bq
\mathop{\min} && \frac{1}{2}[\lambda_1^2+\lambda_2^2] \\
\rm{s.t.} && \lambda_1+\lambda_2-b_1-b_2=0 \\
&& -F_1 \le  \lambda_1-b_1 \le F_1 
\eq
\esq
Fix the strategy of prosumer 2, the optimal solution $\lambda_1$ is a function of $b_1$, which is
\bq
\label{eq:lambda_1}
\lambda _1^* = \begin{cases} 
F_1+b_1, & {\rm{if}} ~ \frac{b_1+b_2}{2}\ge F_1+b_1 \\
\frac{b_1+b_2}{2}, & {\rm{if}} ~ -F_1+b_1 \le \frac{b_1+b_2}{2} \le F_1+b_1 \\
-F_1+b_1,& {\rm{if}} ~ \frac{b_1+b_2}{2}\le -F_1+b_1 
\end{cases}
\eq
The first/last one corresponds to the case in which delivered flow reaches lower/upper bound, and flow constraint is inactive in the second one. Then, we solve the equivalent problem \eqref{eq:central} considering the following situations:

(1) The network is not congested (constraint \eqref{eq:central.3} does not influence the optimal solution). The KKT conditions of \eqref{eq:central} stipulate
\begin{gather*}
\lambda_1=\lambda_2=2c_1p_1+p_1-D_1=2c_2p_2+p_2-D_2  \\
b_1=D_1-p_1+\lambda_1, \;\; b_2=D_2-p_2+\lambda_2 
\end{gather*}
It is easy to check that $\lambda_1=(b_1+b_2)/2$, which corresponds to the second situation of \eqref{eq:lambda_1}. As a result, the equilibrium is uniquely determined. Actually, when there is no congestion, the sharing market degenerates to the case without network constraints in \cite{yue2019arxiv}, where the existence and uniqueness of a NE have been proved.

(2) Delivered flow reaches the lower/upper bound, e.g., $p_1^*-D_1=F_1$ or $p_1^*=D_1+F_1$, then $p_2^*=-F_1+D_2$. The optimal strategy for prosumer 1 is to choose $b_1^*$ that satisfies $(b_1^*+b_2^*)/2=F_1+b_1^*$, which means $b_1^*=b_2^*-2F_1$.
In addition, we can figure out the following relations from the KKT condition
\begin{gather*}
b_1^*=2c_1p_1^* = 2c_1(D_1+F_1) \\
b_2^*=2c_2p_2^* = 2c_2(D_2-F_1)
\end{gather*}
Suppose an IGNE exists, we must have $\lambda_1=\lambda_2$, which means there is no congestion. As a result, there is no IGNE.

But it is worth noting that Any $(p^*,b^*)$ that satisfies $b_1^*=-2F+b_2^*$, $p_1^*=D_1+F_1$ and $p_2^*=D_2+F_2$ are GNEs of this sharing game.
This observation can be generalized in more general cases.
\begin{proposition}
\label{Thm:prop-2-2}
If congestion constraint is redundant, then an IGNE for the sharing game   $\mathcal{G}=\{\mathcal{I},X,\Pi\}$ exists; otherwise, there is none or infinitely many non-isolated GNEs.
\end{proposition}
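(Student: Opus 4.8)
The plan is to split the proof along the dichotomy suggested by the statement: first establish existence of an IGNE when the congestion constraint \eqref{eq:central.3} is redundant, then show that when it is \emph{not} redundant the equilibrium structure collapses to either no IGNE or a continuum of non-isolated GNEs. Throughout I would lean on Proposition~\ref{Thm:prop-1}, which reduces the hunt for an IGNE to solving the strictly convex program \eqref{eq:central} and reading off $b_i^* = \tilde b_i(p_i^*)$.

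For the first case, suppose the flow-limit constraint \eqref{eq:central.3} is redundant, i.e.\ dropping it does not change the feasible optimum. Then \eqref{eq:central} reduces to minimizing the objective \eqref{eq:central.1} subject only to the balancing equality \eqref{eq:central.2}. Since each $c_i^k>0$, the objective is strictly convex in $p$ (the disutility terms are strictly convex and the balancing-penalty terms $(D_i-\sum_k p_i^k)^2/(2a(I-1))$ are convex), so the program has a unique minimizer $p^*$. Setting $b_i^* = \tilde b_i(p_i^*)$ and invoking Proposition~\ref{Thm:prop-1} in the ``if'' direction yields a GNE; strict convexity plus the construction of $\tilde\lambda_i,\tilde b_i$ forces the associated prices to satisfy $\lambda_i=\lambda_j$ for all $i,j$, pinning the point down locally. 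I would then verify that this point is genuinely \emph{isolated}: any nearby GNE would, by Proposition~\ref{Thm:prop-1}, have to correspond to a minimizer of the same strictly convex program, but that minimizer is unique, so no distinct GNE can lie in a small neighborhood. Hence an IGNE exists.

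For the second case, assume \eqref{eq:central.3} is \emph{not} redundant, so at least one flow-limit inequality is active at every optimizer of \eqref{eq:central} (otherwise the constraint could be dropped without changing the optimum, contradicting non-redundancy). The key observation, already foreshadowed by the worked two-prosumer Example, is that an active congestion constraint forces the clearing prices $\lambda_i$ to differ across buses on opposite sides of the congested line, whereas Proposition~\ref{Thm:prop-1} requires an IGNE to yield equal prices. I would formalize this by examining the KKT stationarity of \eqref{eq:central}: when $\tau_l^-$ or $\tau_l^+$ is strictly positive, the reconstructed nodal prices $\tilde\lambda_i$ acquire a congestion component $\pi_{il}(\tau_l^+-\tau_l^-)$ that breaks the equality $\lambda_i=\lambda_j$, so no point satisfying the IGNE price condition can exist. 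Having ruled out an IGNE, I would then exhibit the continuum: because the congestion constraint couples the players' feasible prices, one can perturb the bids of prosumers on the two sides of the congested line in opposite directions while keeping the delivered flow pinned at $F_l$ and the balance \eqref{eq:central.2} intact, generating a one-parameter family of distinct GNEs — exactly the relation $b_1^*=-2F_1+b_2^*$ seen in the Example. This shows any GNE is non-isolated, so there are infinitely many of them; combined with the nonemptiness of the GNE set whenever \eqref{eq:central} is feasible, the alternative is ``none or infinitely many.''

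The main obstacle I anticipate is the second case. Ruling out an IGNE is clean via the price-equality condition from Proposition~\ref{Thm:prop-1}, but constructing the explicit infinite family of non-isolated GNEs in \emph{general} network topology — rather than the single-line Example — is delicate: I would need to characterize the directions in bid-space along which the delivered flows and the balancing condition are simultaneously preserved, which amounts to identifying the null space of the active flow-distribution constraints and confirming it is nontrivial precisely when congestion binds. Verifying that each perturbed point remains a best response for every player (so that it is a bona fide GNE and not merely feasible) is the genuinely technical step, and I expect the proof in Appendix to carry this out by working directly with the per-player optimality conditions rather than only the centralized surrogate \eqref{eq:central}.
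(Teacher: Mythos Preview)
Your handling of the non-congested case is essentially fine (the paper dispatches it by citation to the unconstrained version), but the congested case contains a real gap. You assert that ``Proposition~\ref{Thm:prop-1} requires an IGNE to yield equal prices,'' and then rule out an IGNE by observing that active congestion multipliers $\tau_l^{\pm}$ break price equality. But Proposition~\ref{Thm:prop-1} says no such thing. Its content is only that an IGNE, if it exists, must have $b_i^*=\tilde b_i(p_i^*)$ with $p^*$ the unique minimizer of \eqref{eq:central}; working through the KKT conditions of \eqref{eq:central} (Appendix~A) gives $\lambda_i^* = -\kappa - \sum_l \pi_{il}(\tau_l^- - \tau_l^+)$, which is perfectly happy to be node-dependent when some $\tau_l^{\pm}>0$. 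So the candidate point produced by Proposition~\ref{Thm:prop-1} does \emph{not} violate any price-equality condition, and your contradiction evaporates. (The two-prosumer Example's remark ``we must have $\lambda_1=\lambda_2$'' is a special feature of that instance, not a consequence of Proposition~\ref{Thm:prop-1}.)

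The paper's argument is of a quite different nature. It works with the best-response map $y_i^*(b_i)$ for the lower-level clearing problem and uses the first-order optimality condition $\tfrac{d\Pi_i}{db_i}=0$ at a putative IGNE. Substituting the relation $\lambda_i^*=\tilde\lambda_i(p_i^*)$ from Proposition~\ref{Thm:prop-1}, this forces $\tfrac{dy_i^*}{db_i}=\tfrac{I-1}{I}$ for every $i$; the analysis behind Lemma~\ref{lemma-2} (specifically \eqref{eq:alpha}) then forces $\tfrac{dy_j^*}{db_i}=-\tfrac{1}{I}$ for $j\neq i$. Differentiating the binding flow constraint $\sum_i \pi_{il'} y_i = F_{l'}$ with respect to each $b_i$ now yields that all distribution factors $\pi_{il'}$ coincide across $i$, and a short linear-algebra computation on the admittance-like matrix $B$ shows this is incompatible with strictly positive line reactances. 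That contradiction is what rules out an IGNE; the ``none or infinitely many non-isolated'' alternative then follows immediately from the definition of isolation, and the paper does not (and need not) explicitly construct the one-parameter family you sketch.
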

Proposition \ref{Thm:prop-2-2} is provided in Appendix C, following the proof of Proposition  \ref{Thm:prop-2}. It confirms a negative conclusion that when the network constraints are binding, there is no IGNE. The reason is the market power of prosumers. Nonetheless, the existence of IGNE can be retrieved via a minor modification on the objective function to restrict the market power. 

\section{Improved  Sharing Mechanism}
Before getting into details of the improved energy sharing mechanism, we first give the following lemma.
\begin{lemma}
The optimal output $p_i^{k*},\forall k$ of prosumer $i$ satisfies
\bq
2c_i^kp_i^{k*}=\frac{2\sum_k p_i^{k*}}{\sum_k \frac{1}{c_i^k}}:=md_i
\eq
\end{lemma}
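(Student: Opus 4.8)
The plan is to read the claim directly off the first-order (KKT) stationarity conditions of prosumer $i$'s decision problem \eqref{eq:sharing-upper}. The crucial structural observation is that the resource outputs $p_i^k$ influence neither the clearing price $\lambda_i(b)$ nor the bid $b_i$: the price is the output of the clearing problem \eqref{eq:platform}, which takes only the bid profile $b$ as input. Consequently the sharing cost $s_i(b)=(-a\lambda_i+b_i)\lambda_i$ is a function of $b$ alone, and the $p_i^k$ enter prosumer $i$'s cost $\Pi_i$ only through the disutility $f_i(p_i)=\sum_k c_i^k(p_i^k)^2$, while they enter the balancing constraint only through their sum. Thus, with the bid (and hence $\lambda_i$ and the total $\sum_k p_i^k = D_i + a\lambda_i - b_i$) held fixed, choosing the optimal resource split is exactly the problem of minimizing $\sum_k c_i^k(p_i^k)^2$ at a fixed value of $\sum_k p_i^k$.

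First I would attach a multiplier $\beta_i$ to the balancing constraint and form the Lagrangian of \eqref{eq:sharing-upper}. Differentiating with respect to each $p_i^k$ and setting the derivative to zero, only the disutility term and the single balancing multiplier survive, giving the stationarity condition
\begin{equation*}
2c_i^k p_i^{k*} = \beta_i, \qquad \forall k
\end{equation*}
(up to the sign convention on the multiplier). Hence the marginal disutilities $2c_i^k p_i^{k*}$ are equalized across all of prosumer $i$'s resources at a common value, which we denote $md_i := \beta_i$; this is precisely why the quantity in the lemma carries no index $k$. Since $f_i$ is strictly convex and the $p_i^k$ are otherwise unconstrained, this interior stationarity condition is both necessary and sufficient to characterize the optimal split.

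Next I would eliminate $\beta_i$. From $2c_i^k p_i^{k*}=\beta_i$ we have $p_i^{k*} = \beta_i/(2c_i^k)$; summing over $k$ gives $\sum_k p_i^{k*} = (\beta_i/2)\sum_k 1/c_i^k$, so that
\begin{equation*}
md_i = \beta_i = \frac{2\sum_k p_i^{k*}}{\sum_k \frac{1}{c_i^k}}
\end{equation*}
Substituting back into $2c_i^k p_i^{k*}=\beta_i$ yields the stated identity.

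There is no genuine obstacle here; the only point requiring care is the structural observation in the first paragraph — that the sharing term $s_i$ is independent of how the total output is allocated across resources — which decouples the optimal split from the price/bid determination and makes the stationarity condition in $p_i^k$ involve only the disutility and the balancing multiplier. Once that is established, the result follows in one line by equalizing marginal disutilities and summing over $k$.
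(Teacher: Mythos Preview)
Your proof is correct. The paper, however, takes a different route: it invokes the Cauchy--Schwarz inequality
\[
\sum_k c_i^k (p_i^k)^2 \;\ge\; \frac{\left(\sum_k p_i^k\right)^2}{\sum_k 1/c_i^k}
\]
applied with the fixed total $\sum_k p_i^k = D_i + a\lambda_i - b_i$, and then reads off the optimal split from the equality condition of Cauchy--Schwarz, namely $c_i^1 p_i^1 = c_i^2 p_i^2 = \cdots$. Your argument instead works through the Lagrangian stationarity condition $2c_i^k p_i^{k*}=\beta_i$ and eliminates $\beta_i$ by summation. Both routes rest on the same structural observation you highlight --- that the sharing cost depends only on the bid, so the optimal resource split is a pure weighted-least-squares problem at a fixed sum --- and both produce the equal-marginal-disutility condition. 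The Cauchy--Schwarz version has the side benefit of delivering the explicit minimized disutility $\bar c_i\left(\sum_k p_i^k\right)^2$ with $\bar c_i := 1/\sum_k (1/c_i^k)$, which the paper reuses later; your KKT version is more in line with how the paper proves its other propositions and is arguably the more direct way to obtain just the identity stated in the lemma.
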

\begin{proof}
According to Cauchy-Schwarz Inequality, we have
\bq
\sum \limits_k c_i^k (p_i^k)^2 \ge \frac{1}{\sum_k \frac{1}{c_i^k}}(D_i+a\lambda_i-b_i)^2 \nonumber
\eq
The equality holds when and only when 
$$c_i^1p_i^1=c_i^2p_i^2=...=(\sum_k p_i^k)/(\sum_k \frac{1}{c_i^k}):=\bar c_i \sum_k p_i^k$$. This completes the proof.
\end{proof}

To restrain prosumers' market power, a price regulation procedure is included in \textbf{Step 2} of the sharing procedure in Section II.C. The ultimate sharing price sent to prosumer $i$ is
\bq
\label{eq:priceregulate}
\lambda_i^c = \begin{cases}
\max \left\{ \lambda_i, md_i-\frac{q_i}{a(I-1)} \right\}, & q_i \ge 0 \\
\min \left\{ \lambda_i, md_i-\frac{q_i}{a(I-1)} \right\}, & q_i < 0
\end{cases}
\eq
It means that the difference between the sharing price $\lambda_i^c$ for prosumer $i$ and his marginal disutility $md_i$ is restricted. To be specific, when prosumer $i$ buys from the sharing market ($q_i=-a\lambda_i+b_i>0$), the price privilege ($md_i-\lambda_i^c$) he can get from sharing is at most $(D_i-\sum_k p_i^k)/{a(I-1)}$; when he sells to the sharing market ($q_i=-a\lambda_i+b_i<0$), the price privilege ($\lambda_i^c-md_i$) he can get from sharing is at most $(\sum_k p_i^k-D_i)/a(I-1)$. The maximum price privilege of prosumer $i$ is related to the degree of sharing participation reflected by the sharing amount $q_i$, price sensitivity factor $a$ and the number of prosumers $I$. 

Under this revamped sharing mechanism, the sharing problem of prosumer $i$ becomes
\bsq
\label{eq:modified-obj}
\bq
\mathop {\min} \limits_{p_i^k,\forall k,b_i} && \sum \limits_k c_i^k (p_i^k)^2+u_i(\lambda_i,b_i,p_i) \\
\rm{s.t.} && \sum \limits_k p_i^k-a\lambda_i(b)+b_i=D_i
\eq
\esq
where
\bq
\label{eq:price-regulate}
u_i(\lambda_i,b_i,p_i)  = 
\max (\lambda _i q_i,(md_i - \frac{q_i}{a(I - 1)})q_i)
\eq
The total cost is redefined as 
\begin{equation*}
\Gamma_i(\lambda_i,b_i,p_i):=\sum_k c_i^k (p_i^k)^2+u_i(\lambda_i,b_i,p_i)
\end{equation*} 
and $\lambda_i,\forall i$ is the solution of problem \eqref{eq:platform}. With this improved sharing mechanism, the existence and uniqueness of GNE are guaranteed by the following proposition.


\begin{proposition}
\label{Thm:prop-2}
The sharing game \eqref{eq:modified-obj} with \eqref{eq:platform} has a unique GNE $\hat b$ and $\hat p$ the corresponding optimal adjustment, where $\hat{b}_i=\tilde b_i(\hat{p}_i)$, $\forall i \in \mathcal{I}$, and $\hat{p}_i$ is the unique solution of
\bsq
\label{eq:central2}
\bq
\mathop {\min} \limits_{p_i^k,\forall i,k} && \sum \limits_i \sum \limits_k c_i^k(p_i^k)^2+\sum \limits_i \frac{(D_i-\sum \nolimits_k p_i^k)^2}{2a(I-1)} \label{eq:central2.1}\\
\rm{s.t.} && \sum \limits_i \sum \limits_k p_i^k =\sum \limits_i D_i : \kappa \label{eq:central2.2}\\
&& -F_l \le \sum \limits_i \pi_{il}(D_i-\sum \limits_k p_i^k) \le F_l:\tau_l^{-},\tau_l^{+} \label{eq:central2.3}
\eq
\esq
\end{proposition}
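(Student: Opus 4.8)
The plan is to show that the equilibrium conditions of the regulated game coincide with the Karush--Kuhn--Tucker (KKT) system of the centralized problem \eqref{eq:central2}, and then to exploit its strict convexity to obtain existence and uniqueness simultaneously. Note that \eqref{eq:central2} is identical to \eqref{eq:central}; its objective is strictly convex in $p$ (the term $\sum_{i,k} c_i^k (p_i^k)^2$ is strictly convex in every component, the remaining term is convex), and its feasible set is nonempty and convex (e.g. $\sum_k p_i^k = D_i$ yields zero line flows), so it has a unique minimizer $\hat p$, and I set $\hat b_i := \tilde b_i(\hat p_i)$. By Proposition \ref{Thm:prop-1} the correspondence between a GNE and this centralized solution is already available; the genuinely new content, in light of Proposition \ref{Thm:prop-2-2}, is that price regulation restores existence under congestion. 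To analyze best responses I would first reduce each prosumer's problem to a scalar one: by the Lemma, for a fixed aggregate $P_i := \sum_k p_i^k$ the optimal allocation satisfies $2c_i^k p_i^k = md_i$, so the disutility collapses to $f_i = \bar c_i P_i^2$ with $\bar c_i := (\sum_k 1/c_i^k)^{-1}$, and the balancing constraint forces $q_i = D_i - P_i$; prosumer $i$ thus effectively chooses the single scalar $q_i$ (equivalently $b_i$).

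With this reduction the regulated cost $\Gamma_i = f_i + u_i$ becomes the pointwise maximum of a market branch $g_1(q_i) := f_i + \lambda_i q_i$ and a regulated branch $g_2(q_i) := f_i + \tilde\lambda_i(p_i)\,q_i$. The key facts I would establish are that $g_1$ is convex in $q_i$ with unconstrained minimizer attained exactly at the price-taking point $\lambda_i = \tilde\lambda_i(p_i)$ (the kink where $g_1 = g_2$), and that $g_2$ depends only on $q_i$, namely $g_2 = \bar c_i D_i^2 - (\bar c_i + \frac{1}{a(I-1)}) q_i^2$, so it cannot be lowered by manipulating the clearing price. A short monotonicity argument around the kink---$g_1$ rising on both sides of its minimum while the concave $g_2$ dominates only where $g_1$ has already turned up---then shows $\max(g_1, g_2)$ is globally minimized precisely at the price-taking point. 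This is exactly how the cap \eqref{eq:priceregulate} neutralizes the market power that Proposition \ref{Thm:prop-2-2} identified as the obstruction: any attempt to bend the clearing price in one's favour is clamped by the max to $g_2$, so no manipulation pays and each prosumer best-responds as a price-taker facing $\tilde\lambda_i(p_i)$.

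Stacking these price-taking conditions over all $i$ and appending the clearing conditions of \eqref{eq:platform}, I would verify that the combined system is exactly the KKT system of \eqref{eq:central2}, with the regulated prices identified as locational marginal prices $\tilde\lambda_i(\hat p_i) = \kappa + \sum_l \pi_{il}(\tau_l^+ - \tau_l^-)$. For existence I would check the converse: starting from the unique $\hat p$ and setting $\hat b_i = \tilde b_i(\hat p_i)$, the clearing problem \eqref{eq:platform} returns $q_i(\hat b) = \hat q_i$ and $\lambda_i(\hat b) = \tilde\lambda_i(\hat p_i)$, so $(\hat p, \hat b)$ is a genuine GNE. Uniqueness then follows because every GNE must solve the same KKT system, whose $p$-component is unique by strict convexity, after which $\hat b_i = \tilde b_i(\hat p_i)$ is forced.

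I expect the main obstacle to be the best-response step under congestion: rigorously confirming that the nonsmooth cap pins the minimizer of $\max(g_1, g_2)$ to the price-taking point even when $\lambda_i(b_i)$ is the piecewise-linear output of the parametric clearing QP (so a prosumer may gain extra leverage once a line saturates), and that the variance-minimizing clearing \eqref{eq:platform} reproduces the centralized LMPs at the candidate equilibrium. Reconciling the two multiplier systems---the clearing multipliers $\eta, \alpha_l^\pm$ and the centralized multipliers $\kappa, \tau_l^\pm$---through their shared LMP structure is the technical crux.
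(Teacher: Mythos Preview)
Your overall architecture matches the paper's: reduce each prosumer to a scalar via the Lemma, split $\Gamma_i$ into the market branch $g_1$ and the regulated branch $g_2$, show the best response sits at the kink $\lambda_i=\tilde\lambda_i(p_i)$, then identify the stacked conditions with the KKT system of \eqref{eq:central2} and invoke strict convexity for uniqueness. The closing KKT/LMP identification and the existence-via-construction step are the right shape.

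The genuine gap is exactly where you flagged it, but your proposed resolution is incorrect. The claim that $g_1$ is convex in $q_i$ with its unconstrained minimizer at the price-taking point fails under congestion: the prosumer controls $b_i$, not $q_i$, and the first-order condition for $g_1$ carries a term $q_i\,\tfrac{d\lambda_i}{dq_i}$ that equals $\tfrac{q_i}{a(I-1)}$ only in the uncongested regime. When a line binds, $\tfrac{dy_i}{db_i}$ drops strictly below $\tfrac{I-1}{I}$, the minimum of $g_1$ moves off the kink, and your ``$g_1$ rising on both sides of its minimum'' picture no longer pins the max at the price-taking point. What the paper actually proves (its Lemma~3, via a geometric projection argument on the polyhedron of \eqref{eq:platform}) is the sensitivity bound
\[
0 \;\le\; \frac{dy_i}{db_i} \;\le\; \frac{I-1}{I},
\]
with $y_i=q_i$. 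The lower bound is needed to determine which branch is active on which side of $b_i^*$ and to show $g_2$ is monotone there; the upper bound is precisely what makes the derivative of $g_1$ nonnegative on its active side, since the paper's computation reduces to $\tfrac{d g_1}{db_i}\ge \tfrac{y_i}{a}\bigl(1-\tfrac{I}{I-1}\tfrac{dy_i}{db_i}\bigr)$. The paper also needs a continuity lemma (Maximum Theorem) to glue the piecewise-linear segments of $y_i^*(b_i)$. Your sketch does not supply either ingredient, and the ``short monotonicity argument'' cannot be completed without them.
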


The proof can be found in Appendix B. With the price regulation policy (\ref{eq:priceregulate}),  the market equilibrium is well defined and inherits the properties of the original sharing game.


\section{Properties of the Improved Sharing Mechanism}
\subsection{Individual Rationality of Prosumers}
After characterizing the GNE, now we can show that every prosumer has the incentive to share by comparing the costs associated with the individual decision-making and the GNE. Let $\Pi_i(p^*_i,b^*)$ be the cost of prosumer $i$ at the GNE of sharing game $\mathcal{G}=\{\mathcal{I}, X,\Pi\}$ defined by \eqref{eq:platform} and \eqref{eq:modified-obj}, and $f_i(D_i)$ the cost of prosumer $i$ under individual decision-making.
\begin{proposition}
\label{Thm:prop-3} The following relation holds
\bq
\label{eq:pareto}
\Pi_i (p^*_i,b^*) \le f_i(D_i), \forall i\in \mathcal{I} 
\eq
\end{proposition}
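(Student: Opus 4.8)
The plan is to evaluate the equilibrium cost in closed form, using the GNE characterization of Proposition~\ref{Thm:prop-2} together with the preceding Lemma, and then show the resulting expression never exceeds $f_i(D_i)$. Throughout I write $P_i:=\sum_k \hat p_i^k$, $q_i:=D_i-P_i$, and $\bar c_i:=1/\sum_k(1/c_i^k)$; the minimum achievable by acting alone is the Cauchy--Schwarz value $f_i(D_i)=\bar c_i D_i^2$ recorded earlier, and the equilibrium cost in \eqref{eq:pareto} is the value of $\Gamma_i$ at the GNE, i.e. $\Pi_i(p_i^*,b^*)=f_i(\hat p_i)+u_i$ with $u_i$ as in \eqref{eq:price-regulate}.

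First I would pin down the sharing cost at the equilibrium. By Proposition~\ref{Thm:prop-2} the unique GNE satisfies $\hat b_i=\tilde b_i(\hat p_i)=D_i-P_i+a\tilde\lambda_i(\hat p_i)$, while the clearing rule gives $q_i=-a\lambda_i+\hat b_i$ and the energy-balance constraint in \eqref{eq:modified-obj} forces $q_i=D_i-P_i$. Equating the two expressions for $q_i$ yields $\lambda_i=\tilde\lambda_i(\hat p_i)$: at the equilibrium the cleared price coincides exactly with the regulated target price. Consequently the two arguments of the $\max$ in \eqref{eq:price-regulate} are equal, the case distinction in \eqref{eq:priceregulate} becomes vacuous, and the sharing cost collapses to $u_i=\tilde\lambda_i(\hat p_i)\,q_i$.

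Next I would substitute the Lemma, which gives $2c_i^k\hat p_i^k=md_i=2\bar c_i P_i$ for every $k$, hence $f_i(\hat p_i)=\bar c_i P_i^2$ and $\tilde\lambda_i(\hat p_i)=md_i-\frac{q_i}{a(I-1)}$. Plugging these into $\Pi_i(p_i^*,b^*)=f_i(\hat p_i)+\tilde\lambda_i(\hat p_i)q_i$ and expanding $q_i=D_i-P_i$ makes the cost a function of $P_i$ alone; completing the square in $P_i-D_i$ produces the identity
\begin{equation*}
\Pi_i(p_i^*,b^*)=f_i(D_i)-\left(\bar c_i+\frac{1}{a(I-1)}\right)q_i^2 .
\end{equation*}
Since $\bar c_i>0$, $a>0$ and $I\ge 2$, the subtracted term is nonnegative, which is precisely \eqref{eq:pareto}; it also quantifies the per-prosumer saving of sharing over acting alone and shows the bound is tight exactly when $i$ does not trade ($q_i=0$).

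The step I expect to be the crux is the first one --- establishing $\lambda_i=\tilde\lambda_i(\hat p_i)$ so that the $\max$ defining $u_i$ resolves unambiguously; once that is in hand the remainder is the algebraic identity above. As an independent check that sidesteps the $\max$, I would note the deviation argument: ``not trading'' ($q_i=0$, so $P_i=D_i$) is a feasible unilateral action for $i$, because the clearing program \eqref{eq:platform} is feasible for every bid profile (the all-zero trade satisfies both the balance equality and every flow limit) and $q_i$ depends continuously on $b_i$ and changes sign, so some $b_i$ yields $q_i=0$. That action costs disutility $f_i(D_i)$ and sharing cost $u_i=0$, so the GNE best-response inequality gives $\Pi_i(p_i^*,b^*)\le f_i(D_i)$ directly, corroborating the closed-form bound.
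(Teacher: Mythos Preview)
Your argument is correct, and it takes a genuinely different route from the paper's own proof. The paper argues by deviation: fixing $b_{-i}^*$, it solves an auxiliary $(I{-}1)$-prosumer clearing problem, reads off its dual variables $(\eta^{\prime*},\alpha_l^{\pm\prime*})$, and then exhibits a specific bid $b_i$ which---by matching the stationarity condition \eqref{eq:lowerKKT.1}---forces $a\lambda_i=b_i$, hence $q_i=0$. With no trade the cost is exactly $f_i(D_i)$, and the best-response inequality at the GNE delivers \eqref{eq:pareto}. Your ``independent check'' is this same idea, but you invoke continuity and a sign change instead of constructing the bid; the paper's explicit construction is cleaner here because it avoids having to argue that $q_i(\cdot)$ actually attains both signs, which is not entirely obvious once flow limits may saturate.

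Your primary approach, by contrast, is a direct evaluation: you use Proposition~\ref{Thm:prop-2} and the Lemma to identify $\lambda_i=\tilde\lambda_i(\hat p_i)$ at the equilibrium (so the regulation in \eqref{eq:priceregulate} is inactive---this is exactly Proposition~\ref{Thm:prop-4}) and then compute the equilibrium cost in closed form. What you gain is a sharper, quantitative statement: the per-prosumer saving is $\bigl(\bar c_i+\tfrac{1}{a(I-1)}\bigr)q_i^2$, with equality in \eqref{eq:pareto} iff $q_i=0$. What the paper's route gains is that it does not rely on the explicit characterization of the GNE: it needs only that \emph{some} bid yields $q_i=0$, so it would go through even without the closed form of $(\hat p,\hat b)$ provided by Proposition~\ref{Thm:prop-2}.
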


The proof can be found in Appendix D. It guarantees that with the proposed mechanism, every prosumer can benefit from sharing, such that a Pareto improvement can be achieved. This satisfies one of the two main concerns for sharing mechanism design stated in Section II.A.

\subsection{Sharing Price}

In this subsection, the relationship between the sharing price $\lambda_i^c$ that clears the market, prosumer's marginal disutility, and line congestion is revealed.
 
\begin{proposition}
\label{Thm:prop-4}
Assume $b^*$ is the GNE of the sharing game \eqref{eq:platform} and \eqref{eq:modified-obj}. Then the  sharing price at market equilibrium is given by
\bq
\lambda_i^{c*} = \lambda_i^* = -\kappa^*-\sum_l \pi_{il}\tau_{l}^{*-}+\sum_l \pi_{il}\tau_{l}^{*+}\nonumber
\eq
\end{proposition}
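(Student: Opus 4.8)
The plan is to reduce Proposition~\ref{Thm:prop-4} to three separate identities and verify each in turn. By Proposition~\ref{Thm:prop-2}, at the unique GNE $b^*=\hat b$ the profile $\hat p$ solves the centralized problem \eqref{eq:central2} and $b_i^*=\tilde b_i(\hat p_i)$. Writing $P_i:=\sum_k p_i^k$ and $q_i:=-a\lambda_i+b_i$, I would establish: (i) $\tilde\lambda_i(\hat p_i)=-\kappa^*-\sum_l\pi_{il}\tau_l^{*-}+\sum_l\pi_{il}\tau_l^{*+}$, i.e.\ the target expression is exactly $\tilde\lambda_i$ evaluated at the equilibrium output; (ii) the market-clearing price solving \eqref{eq:platform} at $b=b^*$ satisfies $\lambda_i^*=\tilde\lambda_i(\hat p_i)$; and (iii) the price regulation \eqref{eq:priceregulate} is inactive, so $\lambda_i^{c*}=\lambda_i^*$. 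Chaining (iii), (ii) and (i) yields the stated formula.

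For (i) I would write the KKT stationarity of \eqref{eq:central2} with respect to $p_i^k$. Differentiating the two objective terms and the constraints \eqref{eq:central2.2}--\eqref{eq:central2.3} against the multipliers $\kappa,\tau_l^{\pm}$ gives, for every $k$, the condition $2c_i^k p_i^{k*}+\tfrac{P_i-D_i}{a(I-1)}+\kappa^*-\sum_l\pi_{il}\tau_l^{*+}+\sum_l\pi_{il}\tau_l^{*-}=0$. The first two terms are precisely $\tilde\lambda_i(\hat p_i)$ by its definition; moreover, since all dual terms are $k$-independent, this simultaneously re-derives (consistently with the Lemma) that $2c_i^k p_i^{k*}=md_i$ for all $k$, which is what makes the notation $\tilde\lambda_i$ well defined. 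Rearranging produces identity (i) directly.

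For (ii) I would check that $\lambda_i=\tilde\lambda_i(\hat p_i)$ satisfies the KKT system of the strictly convex QP \eqref{eq:platform}. Feasibility is inherited from \eqref{eq:central2}: using $q_i=D_i-P_i$ (the balancing constraint of \eqref{eq:modified-obj}), one has $\sum_i q_i=\sum_i(D_i-P_i)=0$ by \eqref{eq:central2.2}, and $\sum_i\pi_{il}q_i=\sum_i\pi_{il}(D_i-P_i)\in[-F_l,F_l]$ by \eqref{eq:central2.3}. For stationarity and complementarity I would take the rescaled multipliers $\eta=-2\kappa^*/(aI)$ and $\alpha_l^{\pm}=2\tau_l^{*\pm}/(aI)\ge 0$; substituting these into the stationarity condition $\tfrac{2}{I}\lambda_i-a\eta-a\sum_l\pi_{il}(\alpha_l^{+}-\alpha_l^{-})=0$ of \eqref{eq:platform} reproduces exactly identity (i), and complementary slackness transfers from the $\tau_l^{\pm}$ because the flow expressions $\sum_i\pi_{il}q_i$ and $\sum_i\pi_{il}(D_i-P_i)$ coincide, so the active constraints are the same. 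Strict convexity of $\tfrac1I\sum_i\lambda_i^2$ then makes $\lambda_i^*=\tilde\lambda_i(\hat p_i)$ the unique optimizer. I expect this dual-matching to be the genuine obstacle: getting the scaling factor $2/(aI)$ and all sign conventions consistent between the two programs is where the argument can go wrong.

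Finally, for (iii) I would again use $q_i=D_i-P_i$ together with the Lemma, which give $md_i-\tfrac{q_i}{a(I-1)}=md_i+\tfrac{P_i-D_i}{a(I-1)}=2c_i^k p_i^{k*}+\tfrac{P_i-D_i}{a(I-1)}=\tilde\lambda_i(\hat p_i)=\lambda_i^*$ by steps (i)--(ii). Hence the two arguments of the $\max$ (for $q_i\ge 0$) and of the $\min$ (for $q_i<0$) in \eqref{eq:priceregulate} are equal, so $\lambda_i^{c*}=\lambda_i^*$ for either sign of $q_i$. Combining with (i)--(ii) completes the proof. Steps (i) and (iii) are essentially identifications with the definition of $\tilde\lambda_i$ and with the Lemma, so the only substantive work is the dual bookkeeping in step (ii).
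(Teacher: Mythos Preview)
Your proposal is correct and follows essentially the same route as the paper: identify $\lambda_i^*$ with $\tilde\lambda_i(\hat p_i)$ and then with $-\kappa^*-\sum_l\pi_{il}\tau_l^{*-}+\sum_l\pi_{il}\tau_l^{*+}$ via the dual rescaling $\kappa=\tfrac{aI}{2}\eta$, $\tau_l^{\pm}=\tfrac{aI}{2}\alpha_l^{\pm}$ between the platform problem \eqref{eq:platform} and the centralized problem \eqref{eq:central2}. The paper's version simply cites the EPEC KKT computations already done in Appendix~A, whereas you rebuild the same link directly from Proposition~\ref{Thm:prop-2} and the KKT system of \eqref{eq:central2}; the dual bookkeeping in your step~(ii) is exactly that rescaling. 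One point worth noting: your step~(iii), showing that the two arguments in the price regulation \eqref{eq:priceregulate} coincide at equilibrium so that $\lambda_i^{c*}=\lambda_i^*$, is stated in the Proposition but not explicitly argued in the paper's short proof, so your treatment is in fact more complete on that front.
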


The proof can be found in Appendix E. We can observe that the sharing price for prosumer $i$ consists of two parts: the first term $-\kappa^*$ corresponds to the marginal cost of energy and  is the same for all prosumers; the remaining terms $-\sum_l \pi_{il}\tau_{l}^{*-}+\sum_l \pi_{il}\tau_{l}^{*+}$ reflect prosumer $i$'s contribution on line congestion. Such a price exhibits a structure similar to the locational marginal price \cite{bose2019some}.

\textbf{Discussion:} Another important issue is whether a subsidy is needed to run the platform. The sum of the sharing costs $s_i(b^*)$ for all prosumers at the equilibrium state is
\bq
&& \sum\limits_i \lambda_i^{c*}(-a\lambda_i^*+b_i^*)= \sum\limits_i \lambda_i^{*}(-a\lambda_i^*+b_i^*) \nonumber\\
&=& \sum \limits_i (-\kappa^*-\sum \limits_l \pi_{il}\tau_l^{*-}+\sum \limits_l \pi_{il} \tau_l^{*+})(-a\lambda_i^*+b_i^*) \nonumber\\
& = & \sum \limits_i [-\sum \limits_l \pi_{il}(-a\lambda_i^*+b_i^*)\tau_l^{*-}+\sum \limits_l \pi_{il} (-a\lambda_i^*+b_i^*)\tau_l^{*+}] \nonumber\\
& = & \sum \limits_l [-\sum \limits_i \pi_{il}(-a\lambda_i^*+b_i^*)\tau_l^{*-}+\sum \limits_i \pi_{il} (-a\lambda_i^*+b_i^*)\tau_l^{*+}] \nonumber\\
& = & \sum \limits_l F_l(\tau_l^{*-}+\tau_l^{*+}) \ge 0
\eq
If there is no congestion in the network, then the sum of $s_i(b^*)$ is equal to zero, which means that the sharing market is self-balanced economically. In the presence of congestion, after sharing, not only every prosumer is better-off and a positive payment is given to the sharing platform, which is similar to the renown concept of financial transmission right (FTR) \cite{bose2019some}. 

\subsection{Social Efficiency}
According to Proposition \ref{Thm:prop-3}, the proposed sharing mechanism can improve the efficiency compared with individual decision-making. In this subsection, we show that how close is the efficiency of the proposed mechanism compared with the centralized decision making problem:
\bsq
\label{eq:AGG}
\bq
\mathop {\min }\limits_{{p_{i,\forall i \in \mathcal{I}}}} && \sum\limits_i \sum \limits_k {{c_i^k}(p_i^k)^2} \label{AGG.1}\\
{\rm{s}}{\rm{.t}}{\rm{.}} && \sum\limits_i \sum \limits_k {{p_i^k}}  = \sum\limits_i {{D_i}} \label{AGG.2}\\
&& -F_l \le \sum \limits_i \pi_{il}(D_i-\sum \limits_k p_i^k) \le F_l
\eq
\esq 

\begin{definition} (Socially Optimum)
$\bar p$ is socially optimal if $\bar p$ is the unique optimal solution of \eqref{eq:AGG}.
\end{definition}

Proposition \ref{Thm:prop-2} claims that the GNE of the modified sharing game always exists and can be extracted from the optimal solution of problem \eqref{eq:central2}. Problems \eqref{eq:central2} and \eqref{eq:AGG} seem alike except for the second term $\sum_i (D_i-\sum_k p_i^k)^2/2a(I-1)$ in the objective function. This term approaches zero when $I \rightarrow \infty$, and thus problem \eqref{eq:central2} is identical to the social optimal problem \eqref{eq:AGG}. The gap between sharing and the social optimum when $I \rightarrow \infty$ is stated in the following proposition.

\begin{proposition}
\label{Thm:prop-5}
Let $b^*(I)$ be the GNE of \eqref{eq:platform},\eqref{eq:modified-obj}, $p^*(I)$ the corresponding optimal adjustment and $\bar p(I)$ be the socially optimal solution of \eqref{eq:AGG}. Then, we have
$$\sum \nolimits_{i \in \mathcal{I}} f_i(p_i^*(I)) \ge \sum \nolimits_{i \in \mathcal{I}} f_i(\bar p_i(I))$$
and the average cost difference
\bq
\mathop {\lim }\limits_{I \to \infty } \frac{1}{I}\left[ {\sum\nolimits_{i \in \mathcal{I}} {{f_i}(p_i^*(I)) - \sum\nolimits_{i \in \mathcal{I}} {{f_i}({{\bar p}_i}(I))} } } \right] = 0 \nonumber
\eq
\end{proposition}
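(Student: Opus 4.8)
The plan is to reduce the statement to a comparison of two convex programs over a common feasible set. By Proposition~\ref{Thm:prop-2}, the equilibrium adjustment $p^*(I)$ is exactly the minimizer of the penalized problem \eqref{eq:central2}, while the socially optimal $\bar p(I)$ minimizes \eqref{eq:AGG}; the key observation is that these two optimizations run over the \emph{same} feasible set, since the constraints \eqref{eq:central2.2}--\eqref{eq:central2.3} coincide with those of \eqref{eq:AGG}. Write $J(p):=\sum_i f_i(p_i)=\sum_i\sum_k c_i^k(p_i^k)^2$ for the social disutility and $P(p):=\sum_i (D_i-\sum_k p_i^k)^2/(2a(I-1))\ge 0$ for the extra penalty, so that \eqref{eq:AGG} minimizes $J$ and \eqref{eq:central2} minimizes $J+P$ over the common feasible set, call it $\mathcal F$.

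The first (inequality) claim is then immediate: since $\bar p(I)$ minimizes $J$ over $\mathcal F$ and $p^*(I)\in\mathcal F$, we have $\sum_i f_i(\bar p_i(I))=J(\bar p)\le J(p^*)=\sum_i f_i(p_i^*(I))$. For the limit I would establish a two-sided bound on the gap. Optimality of $p^*$ for $J+P$ gives $J(p^*)+P(p^*)\le J(\bar p)+P(\bar p)$, and since $P(p^*)\ge 0$ this yields $J(p^*)-J(\bar p)\le P(\bar p)$. Combined with the first claim this gives the sandwich $0\le J(p^*)-J(\bar p)\le P(\bar p)$, so it suffices to show $P(\bar p)/I\to 0$.

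To control $P(\bar p)$ I would bound the net trades $q_i:=D_i-\sum_k \bar p_i^k$ at the social optimum. The no-trade point $p^{\mathrm{ind}}$ with $\sum_k p_i^k=D_i$ for all $i$ is always feasible for \eqref{eq:AGG} (every $q_i=0$ trivially satisfies clearing and the flow limits), and by Lemma~1 / Cauchy--Schwarz its cost is $J(p^{\mathrm{ind}})=\sum_i D_i^2/\sum_k(1/c_i^k)$. Since each prosumer's allocation across $k$ is unconstrained, Lemma~1 also applies at $\bar p$, so $J(\bar p)=\sum_i (\sum_k\bar p_i^k)^2/\sum_k(1/c_i^k)$, and optimality of $\bar p$ forces $\sum_i (\sum_k\bar p_i^k)^2/\sum_k(1/c_i^k)\le \sum_i D_i^2/\sum_k(1/c_i^k)$. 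Under the standing boundedness of the data (the $D_i$ and coefficients $c_i^k$ drawn from a bounded family as $I$ grows), this bounds each $\sum_k\bar p_i^k$, hence each $q_i$, uniformly, giving $\sum_i q_i^2=O(I)$. Then $\frac1I P(\bar p)=\frac{1}{2aI(I-1)}\sum_i q_i^2=O\!\left(\frac{1}{I-1}\right)\to 0$, and the sandwich $0\le \frac1I[J(p^*)-J(\bar p)]\le \frac1I P(\bar p)$ forces the average gap to $0$.

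The main obstacle is precisely the last estimate $\sum_i q_i^2=O(I)$. The penalized objective directly controls $P(p^*)$ at the equilibrium but not $P(\bar p)$ at the social optimum, so one genuinely needs an independent bound on the social-optimum trades; this is where a uniform boundedness assumption on the problem data is essential. Once that is granted, the $1/(I-1)$ decay of the per-prosumer penalty factor does the rest, and the argument is otherwise routine bookkeeping.
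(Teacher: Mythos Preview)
Your sandwich argument $0\le J(p^*)-J(\bar p)\le P(\bar p)$ is exactly what the paper does, so the structure is the same. The difference is in the final step, bounding the net trades $q_i=D_i-\sum_k \bar p_i^k$.

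You try to control $\sum_i q_i^2$ via a cost comparison with the no-trade point and an extra ``bounded data'' hypothesis. Two comments. First, your intermediate claim that the inequality $\sum_i(\sum_k\bar p_i^k)^2/\sum_k(1/c_i^k)\le\sum_i D_i^2/\sum_k(1/c_i^k)$ ``bounds each $\sum_k\bar p_i^k$ uniformly'' is false as stated: an $O(I)$ bound on a sum of squares does not bound individual terms. What you actually need, $\sum_i q_i^2=O(I)$, \emph{does} follow directly from that sum inequality (plus bounded $c_i^k$ and $D_i$) via $\sum_i q_i^2\le 2\sum_i D_i^2+2\sum_i(\sum_k\bar p_i^k)^2$, so the conclusion survives but the reasoning should be rewritten. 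Second, and more importantly, the paper does not assume bounded $D_i,c_i^k$; instead it invokes Assumption~\ref{assumption-1} (bounded vertex degree $G^D$) together with the flow limits: since the net injection at node $i$ equals the sum of flows on the at most $G^D$ incident lines, each bounded by $\hat F:=\max_l F_l$, one gets $|q_i|\le G^D\hat F$ for \emph{every} feasible point, hence for both $\bar p$ and $p^*$. This gives $\sum_i q_i^2\le I(G^D\hat F)^2$ without any hypothesis on the cost data, and the $1/(I-1)$ factor finishes the job exactly as in your argument.

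So your route is correct once the misstatement is fixed, but it imports an assumption the paper does not make; the paper's bound via the network structure is both cleaner and uses only what is already on the table.
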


The  proof can be found in Appendix F. It declares that the proposed sharing mechanism can approach the social optimum with an increasing number of market participants.

\subsection{Role of Competition}
We have shown that social efficiency can be enhanced by employing more prosumers, and thus more resources, without changing the resource endowment of original prosumers. In this part, we consider another situation that the total number of resource is fixed and show that introducing competition can benefit social efficiency.

Let $(I,K,D)$ denotes a scenario that there are $I$ prosumers and each of them possesses $K$ resources, the demand adjustment for them is $D:=\{D_i,\forall i\}$. Assume that the GNE under scenario $(I,K,D)$ is $b^*$ and $p^*$ the corresponding optimal adjustment. We introduce competition in the following way:

The resource prosumer $i$ owns is equally distributed to $M$ prosumers ($K$ is divisible by $M$) and the demand adjustment accordingly, and $D_i^m,\forall m$ satisfies
\bq
 \sum \limits_{j=\frac{(m-1)K}{M}+1}^{\frac{mK}{M}} p_{i}^{j*}- D_{i}^m & = & \frac{1}{M} (\sum \limits_k p_i^{k*}- D_i), m=\{1,2,...,M\},\forall i \nonumber
\eq

After introducing competition, $I^{'}=MI$, $K ^{'}=K/M$ and $D^{'}:=\{D_i^m,\forall i,m\}$. We call the new scenario $(I^{'},K^{'},D^{'})$ a equal partition of scenario $(I,K,D)$. With this equal partition rule, we have the following proposition.
\begin{proposition}
\label{Thm:prop-6}
Suppose $(p^*(I,K,D),b^*(I,K,D))$ is the unique GNE of the sharing problem \eqref{eq:modified-obj} and \eqref{eq:platform} with $I>1$, then for scenario $(I,K,D)$ and its equal partition $(I^{'},K^{'},D^{'})$, we have
\bq
\sum \limits_{i=1}^{I} f_i(p_i^*(I,K,D)) \ge \sum \limits_{i=1}^{I^{'}} f_i(p_i^*(I^{'},K^{'},D^{'}))
\eq
\end{proposition}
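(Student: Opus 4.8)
The plan is to reduce both equilibria to the convex central problems of Proposition \ref{Thm:prop-2} and then play the two minimality inequalities against each other. By Proposition \ref{Thm:prop-2}, $p^*:=p^*(I,K,D)$ is the unique minimizer of \eqref{eq:central2} written for the $I$ original prosumers, and $\tilde p:=p^*(I',K',D')$ is the unique minimizer of \eqref{eq:central2} written for the $I'=MI$ partitioned prosumers. Writing the objective \eqref{eq:central2.1} as $J(p)=g(p)+R(p)$, where $g(p)=\sum_i f_i(p_i)$ is the disutility and $R$ is the quadratic regularizer, the target inequality is exactly $g(\tilde p)\le g(p^*)$, since $g$ is the same quadratic in the shared resource outputs so the partitioned total disutility $\sum_{i=1}^{I'}f_i(\tilde p)$ equals $g(\tilde p)$.

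First I would establish the structural fact that the two central problems are posed over the same feasible set, under the natural identification of each child resource with its parent resource $(i,k)$. This is where the equal-partition rule does its work: since $\sum_m D_i^m=D_i$, the balance constraint \eqref{eq:central2.2} is unchanged; and since the $M$ children of prosumer $i$ sit at its bus and inherit its factor $\pi_{il}$, each line flow in \eqref{eq:central2.3} telescopes, $\sum_m \pi_{il}(D_i^m-\sum_{k\in m}p_i^k)=\pi_{il}(D_i-\sum_k p_i^k)$, so \eqref{eq:central2.3} is unchanged as well. Hence $p^*$ and $\tilde p$ live in one common feasible set $\mathcal F$; the two problems differ only in their regularizers $R_I(p)=\tfrac{1}{2a(I-1)}\sum_i(D_i-\sum_k p_i^k)^2$ and $R_{I'}(p)=\tfrac{1}{2a(I'-1)}\sum_{i,m}(D_i^m-\sum_{k\in m}p_i^k)^2$, and in particular each minimizer is feasible for the other problem.

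Next I would exploit the embedded point $\hat p:=p^*$, read as a strategy of the partitioned game. By the definition of $D_i^m$, the sharing amount of every child equals $\tfrac1M(D_i-\sum_k p_i^{k*})$, i.e. the parent's amount split equally. Two consequences follow at once: $g(\hat p)=g(p^*)$ (identical resource outputs), and, because an equal split attains equality in the power-mean bound $\sum_m x_m^2\ge\tfrac1M(\sum_m x_m)^2$, one gets $R_{I'}(\hat p)=\rho\,R_I(p^*)$ with $\rho:=\tfrac{I-1}{M(MI-1)}$, where $0<\rho<1$ for $M\ge2$ (the case $M=1$ is trivial). Conversely, for the genuine minimizer $\tilde p$ the same power-mean inequality, applied as a lower bound on the sum of squared child amounts whose total is the parent amount, gives $R_{I'}(\tilde p)\ge\rho\,R_I(\tilde p)$.

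Finally I would combine the two minimality inequalities $J_I(p^*)\le J_I(\tilde p)$ and $J_{I'}(\tilde p)\le J_{I'}(\hat p)$, both legitimate because $\hat p,\tilde p,p^*\in\mathcal F$. Setting $\Delta g:=g(\tilde p)-g(p^*)$ and $\Delta R:=R_I(p^*)-R_I(\tilde p)$, the first inequality yields $\Delta R\le\Delta g$, while the second, after inserting $R_{I'}(\hat p)=\rho R_I(p^*)$ and $R_{I'}(\tilde p)\ge\rho R_I(\tilde p)$, yields $\Delta g\le\rho\,\Delta R$. If $\Delta g>0$ these force $\Delta R\ge\Delta g/\rho>\Delta g$, contradicting $\Delta R\le\Delta g$; hence $\Delta g\le0$, which is the claim. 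The main obstacle, and the reason this is not a one-line comparative-statics argument, is that the two regularizers are not scalar multiples of a common function, so one cannot simply say ``less regularization lowers $g$''; the crux is that the equal-split embedded point makes $R_{I'}(\hat p)$ an \emph{exact} multiple $\rho$ of $R_I(p^*)$, while the power-mean bound controls $R_{I'}(\tilde p)$ from below by the \emph{same} factor $\rho$, which is precisely what lets the two optimality inequalities be squeezed together.
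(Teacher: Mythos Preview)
Your proof is correct and follows the same overall strategy as the paper: reduce both equilibria to the strictly convex central problems of Proposition~\ref{Thm:prop-2}, note that the feasible sets coincide, and then play the two minimality inequalities against one another to force $g(\tilde p)\le g(p^*)$.

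There is one technical variation worth noting. The paper first verifies via KKT matching that the pre-partition optimum $p^{1*}$ is also the minimizer of $f+w$, where $w$ is the partitioned regularizer with denominator $2a(I-1)/M$; since $w$ and the post-partition regularizer $h$ (your $R_{I'}$) are then \emph{exactly} proportional, the conclusion follows from a clean ``heavier-penalty $\Rightarrow$ larger $f$'' comparative-statics step. You instead keep the original aggregated regularizer $R_I$ and invoke the power-mean bound $\sum_m x_m^2\ge\tfrac1M(\sum_m x_m)^2$ to obtain $R_{I'}\ge\rho R_I$ with equality at the equal-split embedded point, which yields the same squeeze $\Delta R\le\Delta g\le\rho\,\Delta R$. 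Your route avoids the KKT re-verification and is arguably more direct; the paper's route makes the monotonicity-in-regularization structure more explicit. Both arrive at the same contradiction when $\Delta g>0$.
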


The proof can be found in Appendix G. It demonstrates that the system is the most efficient when all resources are possessed by one prosumer; otherwise, introducing competition in an equal partition way can reduce the total social cost and thus enhance the social efficiency.

\section{Illustrative Examples}

In this section, numerical experiments are presented to illustrate theoretical results. First, a simple case is used to show the basic setup. Then, the impacts of several factors are analyzed, including the impact of network constraints, number of prosumers as well as the role of competition.
 
\subsection{Benchmark Case}
The simplest case with only two prosumers is taken as an illustrative example. Each of them owns one resource and is located at one side of a line. $p_1/p_2$ is the resource output of prosumer 1/2. The price sensitivity factor is $a=1$ and the cost coefficients are $c_1=2.5$, $c_2=3.5$. The required demand adjustments are $D_1=3$ and $D_2=7$. First we let $F_1=10$ which is the case without congestion, and the best response curves of two prosumers are plotted at the left side of Fig. \ref{fig:BRC}. A unique generalized Nash equilibrium $(b_1^*,b_2^*)=(27.14,32.00)$ and $(p_1^*,p_2^*)=(5.43,4.57)$ is given by the intersection of two curves. Then we decrease the flow limit and choose $F_1=2$, creating a case with congestion. With the intuitive sharing mechanism in Section II, the best response curves are shown in the middle of Fig. \ref{fig:BRC}. Two curves coincide meaning that there are infinity many GNEs but no IGNE. With the improved sharing mechanism in Section IV, the best response curves are drawn in the right-hand side of Fig. \ref{fig:BRC}, giving an unique GNE $(b_1^*,b_2^*)=(25.00,35.00)$ and $(p_1^*,p_2^*)=(5,25,5,35)$.
\begin{figure}[!t]
\centerline{\includegraphics[width=1.05\columnwidth]{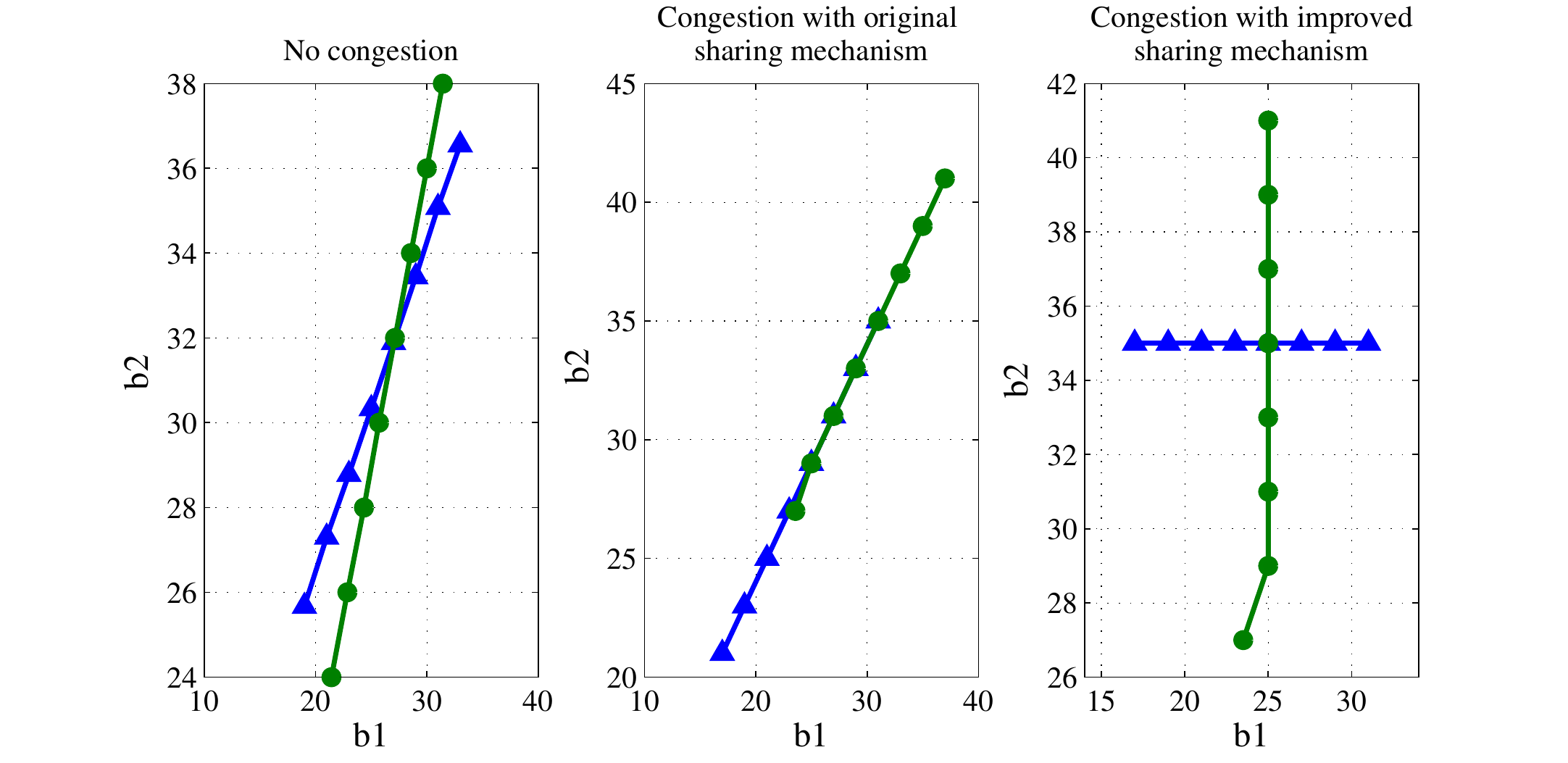}}
\setlength{\abovecaptionskip}{0pt}
\caption{Best response curves in different situations.}
\setlength{\belowcaptionskip}{-5em}
\label{fig:BRC}
\vspace{-1em}
\end{figure}

\subsection{Impact of Flow Limit}
In this section, the impact of flow limit is investigated. A simple network with three prosumers, each at one vertex of a triangle, is tested with $c=[2.5, 3.5, 4.5]$ and $D=[3,7,11]$. We change the flow limit of all lines simultaneously from 1 to 3.5, the change of nodal prices under social optimal (SCO) and sharing market equilibrium (SMK) as well as the social total costs are shown in Fig. \ref{fig:FLM}. Both social total costs decrease with relaxing flow limit, and their relative differences are all less than 0.008\%, showing that the proposed sharing mechanism can achieve a near-optimal solution, which is efficient. The variances of nodal prices under SCO and SMK decline with a looser restriction on energy flows. Under a specific flow limit, the variance of nodal prices under SMK is smaller than that under SCO, reducing the price discrimination among prosumers.
\begin{figure}[!t]
\centerline{\includegraphics[width=0.8\columnwidth]{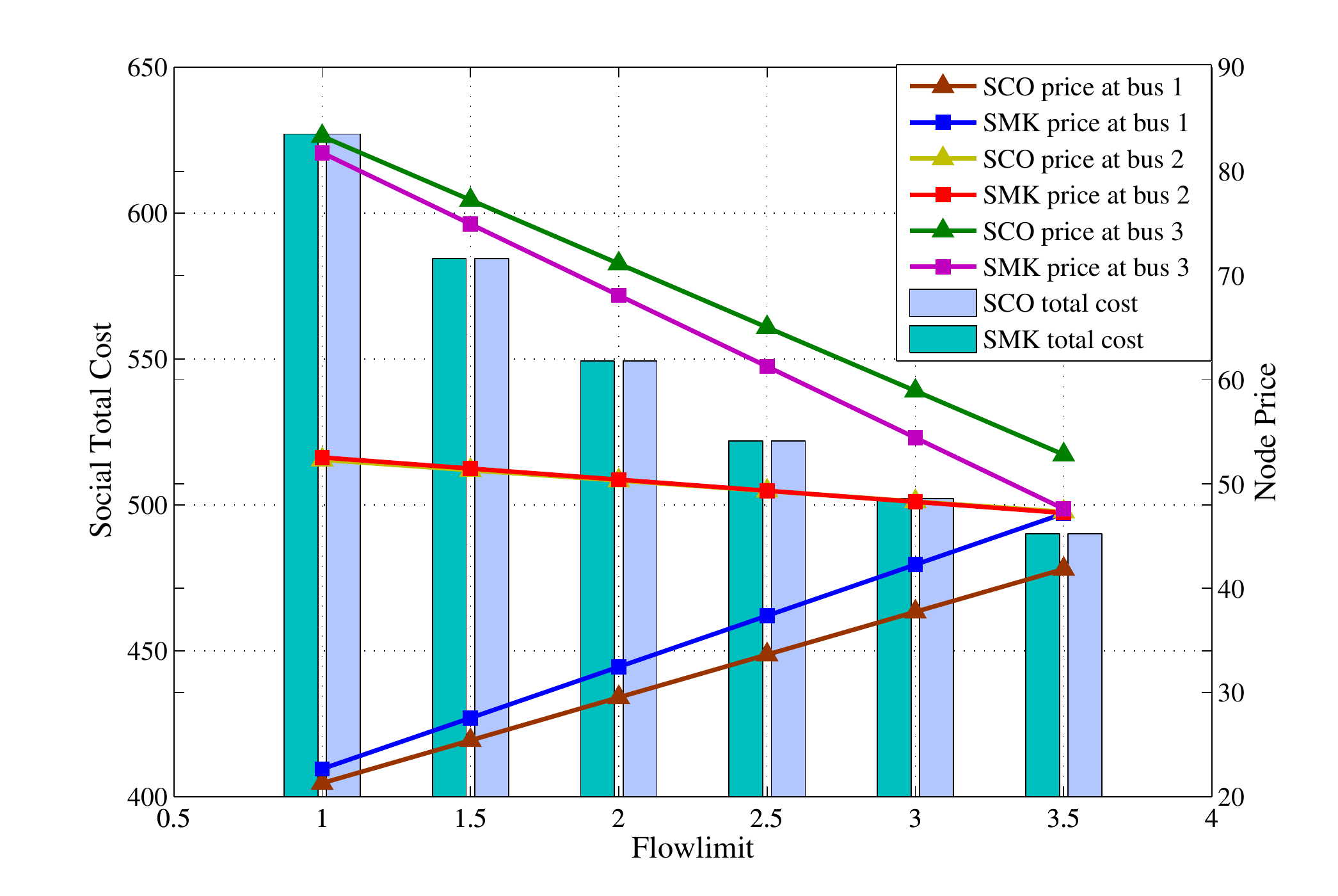}}
\setlength{\abovecaptionskip}{0pt}
\caption{Nodal prices and social total costs under SCO and SMK.}
\setlength{\belowcaptionskip}{-5em}
\label{fig:FLM}
\vspace{-1em}
\end{figure}
\subsection{Impact of the Number of Prosumers}
We change the number of prosumers $I$ from 2 to 30. The parameters $c_i$, $D_i$ are randomly chosen and 10 scenarios are tested. The topology of the test system is shown in Fig.\ref{fig:topology}. Average cost gaps between SCO and SMK with different $I$ are given in Fig. \ref{fig:ARD}. We can observe that the average total cost under sharing is always larger than that under social optimum, but the gap converges to zero with the increase of $I$.
\begin{figure}[!t]
\centerline{\includegraphics[width=0.7\columnwidth]{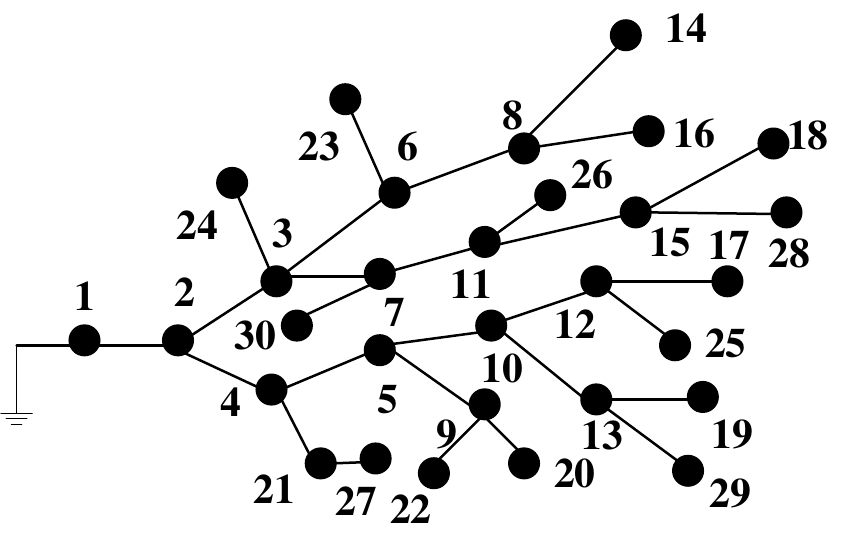}}
\setlength{\abovecaptionskip}{0pt}
\caption{Topology of the test system.}
\setlength{\belowcaptionskip}{-5em}
\label{fig:topology}
\vspace{-1em}
\end{figure}
\begin{figure}[!t]
\centerline{\includegraphics[width=0.85\columnwidth]{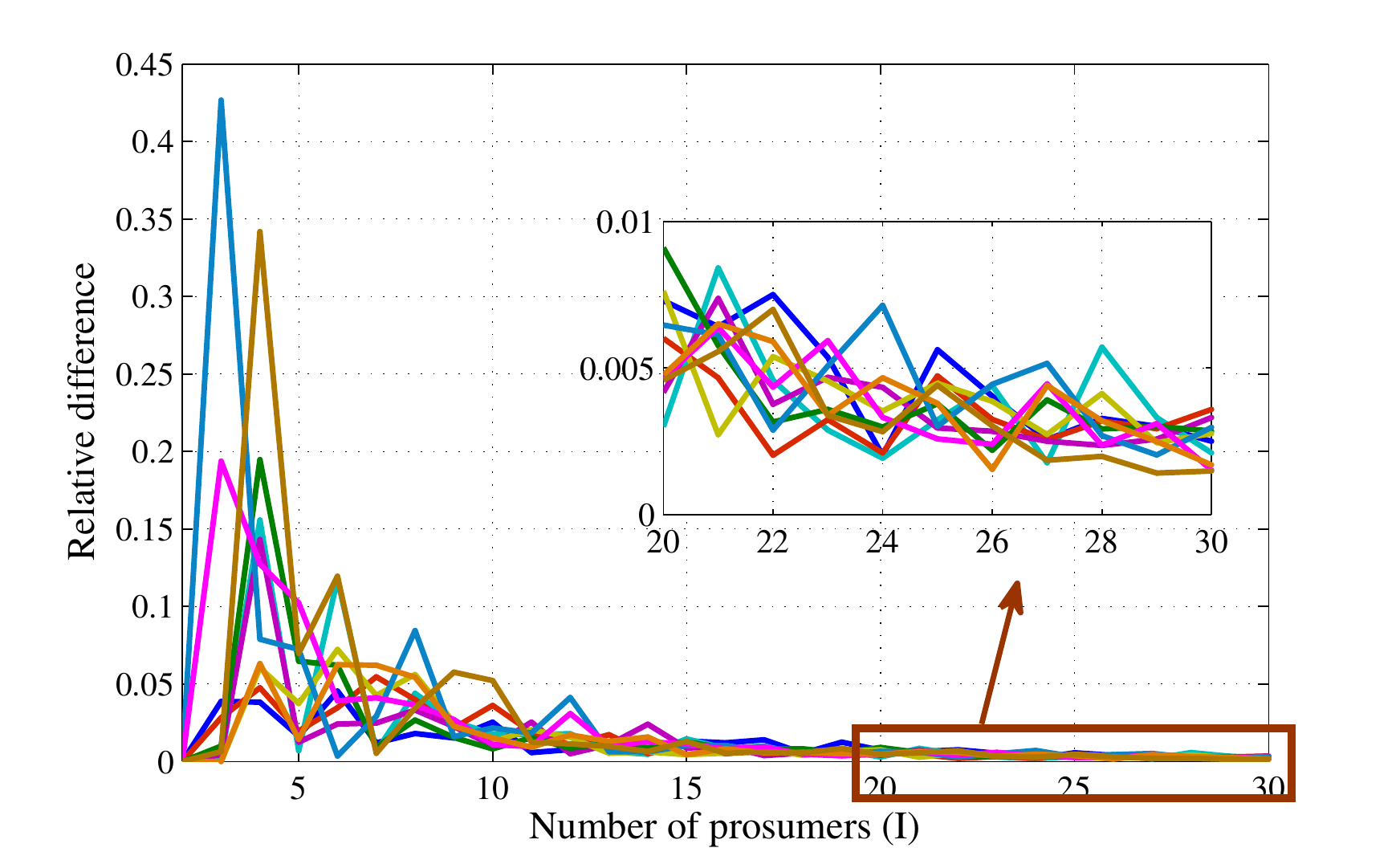}}
\setlength{\abovecaptionskip}{0pt}
\caption{Average relative difference of social total costs between SCO and SMK.}
\setlength{\belowcaptionskip}{-5em}
\label{fig:ARD}
\vspace{-1em}
\end{figure}

\subsection{Impact of Competition}
A special case, in which all prosumers own the same number of resources and are split at the same time, is analyzed in Section V, and Proposition \ref{Thm:prop-6} asserts that competition helps reducing the social total cost. In this subsection, more general cases are considered. A network with 6 lines and 16 resources is tested. First, we change the flow limit to let each line to be congested one-by-one, and then observe how the social total cost will change when competition is introduced to both ends of different lines. The results are shown in Fig. \ref{fig:CP1}. We can find that with the introduction of competition, the total cost always becomes lower. When different lines are congested, the relative relation of introducing competition to different lines remains the same, showing that the position of a line rather than whether it is congested or not is an important factor when we choose where to add competition. 

In the second case, competition is added to node $1 \to 5 \to 4 \to 2 \to 6 \to 7$ sequentially. To eliminate the impact of amplitude, the change of total costs after scaling are presented in Fig.\ref{fig:CP2}. Results show a declining trend in the total cost when the resources are distributed among an increasing number of prosumers.

\begin{figure}[!t]
\centerline{\includegraphics[width=0.8\columnwidth]{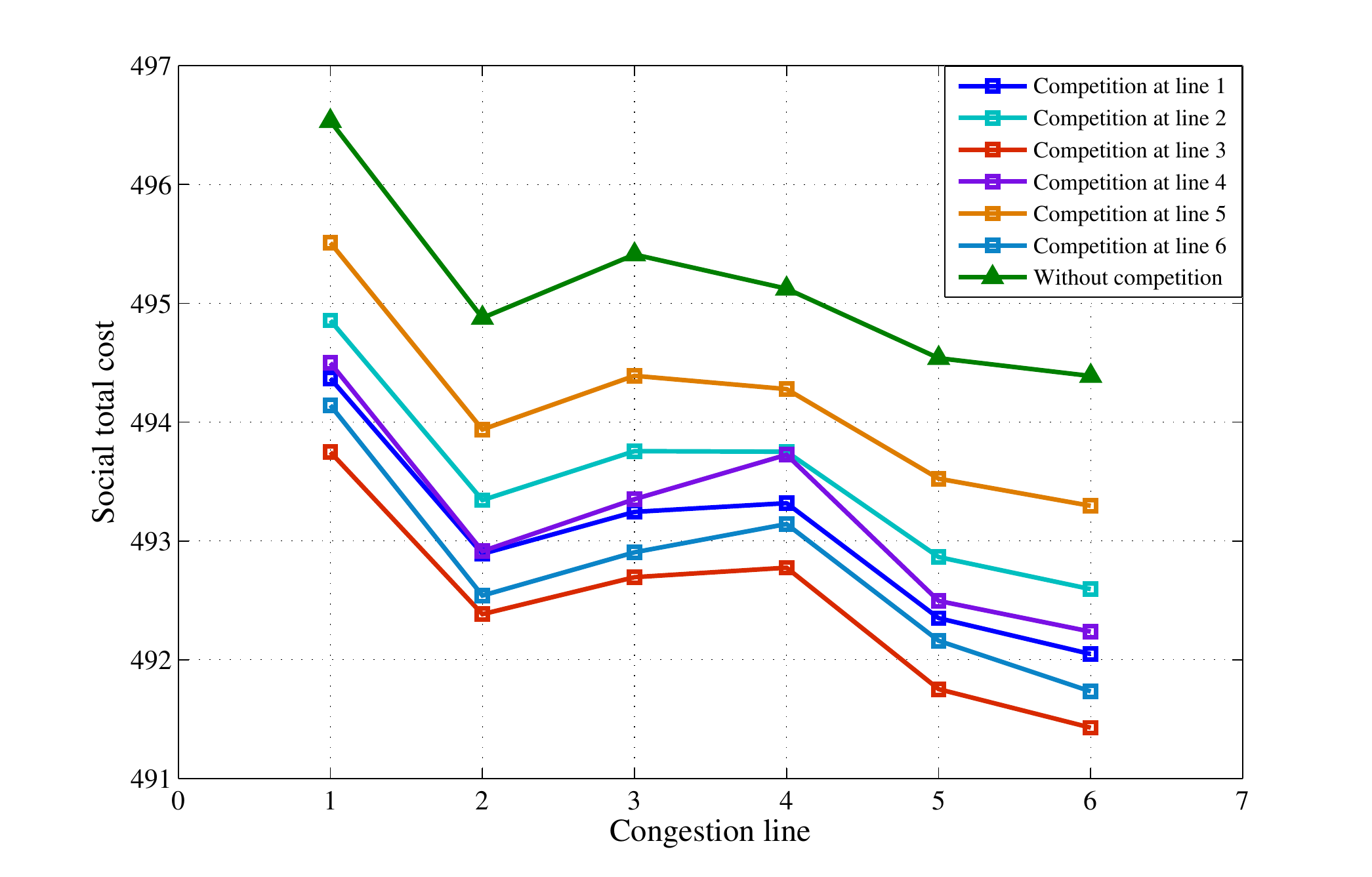}}
\setlength{\abovecaptionskip}{0pt}
\caption{Role of competition when different line is congested.}
\setlength{\belowcaptionskip}{-5em}
\label{fig:CP1}
\vspace{-1em}
\end{figure}

\begin{figure}[!t]
\centerline{\includegraphics[width=0.8\columnwidth]{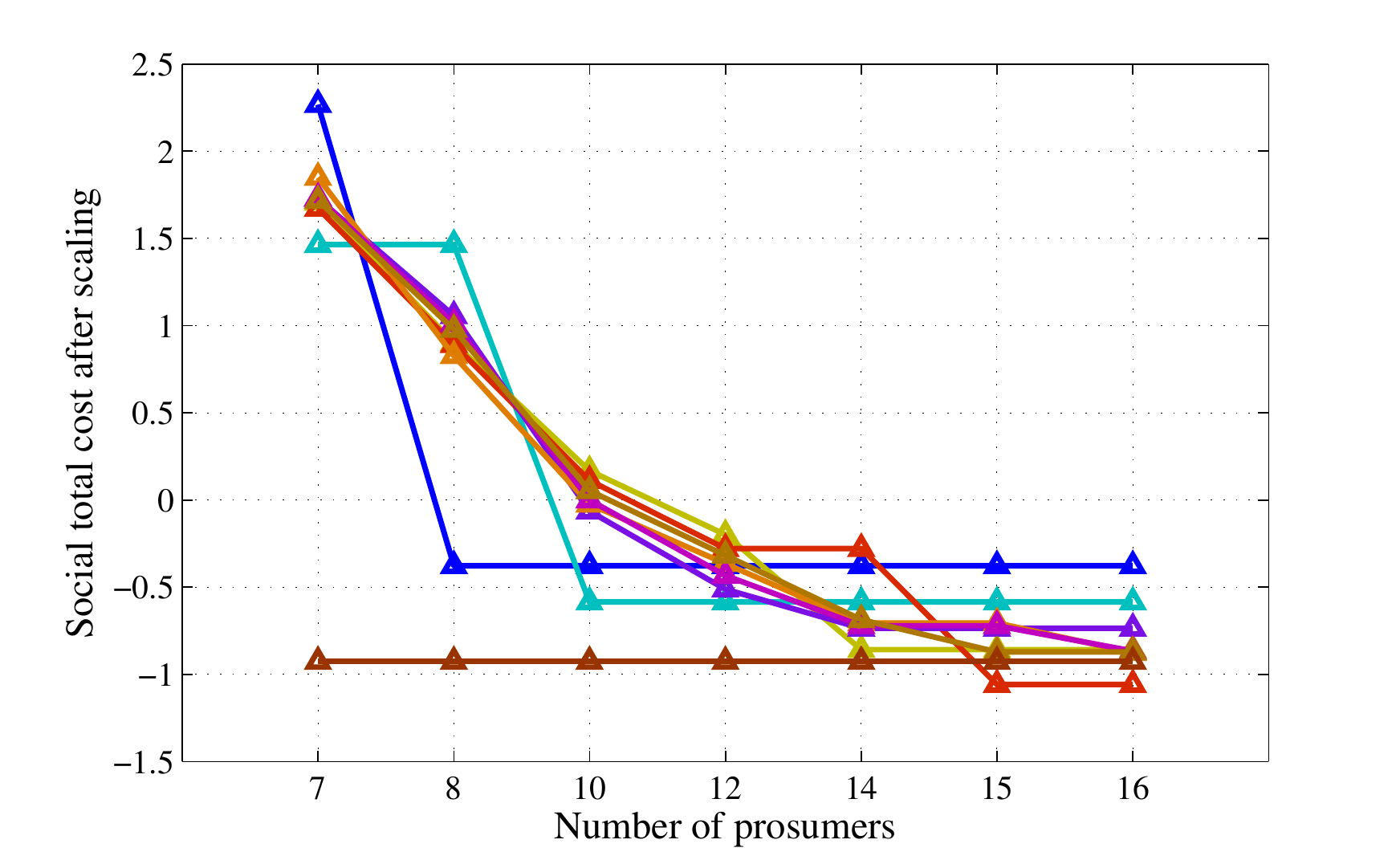}}
\setlength{\abovecaptionskip}{0pt}
\caption{Tendency of total cost when introducing competition.}
\setlength{\belowcaptionskip}{-5em}
\label{fig:CP2}
\vspace{-1em}
\end{figure}

\section{Conclusion}
Prosumers endowed with distributed generation facilities are becoming popular recently, inspiring a new paradigm for energy management via a sharing market. A well-designed sharing mechanism is imperative. This paper comes up with a generic supply-demand function based sharing mechanism considering network constraints and fairness of prices. Price regulation is introduced to restrict market power, ensuring the existence of market equilibrium. Our research discloses some  fundamental properties of the sharing mechanism:

1) The improved sharing mechanism can ensure the existence and uniqueness of a GNE; and a Pareto improvement is achieved among prosumers via sharing.

2) The average cost gap between SMK and SCO tends to zero when the number of prosumers approaches infinity.

3) Competition helps reducing the total social cost and one important factor regarding competition is the location of lines.
  
\ifCLASSOPTIONcaptionsoff
\newpage
\fi

\bibliographystyle{IEEEtran}
\bibliography{IEEEabrv,mybib}

\begin{thebibliography}{10}
\providecommand{\url}[1]{#1}
\csname url@samestyle\endcsname
\providecommand{\newblock}{\relax}
\providecommand{\bibinfo}[2]{#2}
\providecommand{\BIBentrySTDinterwordspacing}{\spaceskip=0pt\relax}
\providecommand{\BIBentryALTinterwordstretchfactor}{4}
\providecommand{\BIBentryALTinterwordspacing}{\spaceskip=\fontdimen2\font plus
\BIBentryALTinterwordstretchfactor\fontdimen3\font minus
  \fontdimen4\font\relax}
\providecommand{\BIBforeignlanguage}[2]{{%
\expandafter\ifx\csname l@#1\endcsname\relax
\typeout{** WARNING: IEEEtran.bst: No hyphenation pattern has been}%
\typeout{** loaded for the language `#1'. Using the pattern for}%
\typeout{** the default language instead.}%
\else
\language=\csname l@#1\endcsname
\fi
#2}}
\providecommand{\BIBdecl}{\relax}
\BIBdecl

\bibitem{da2014impact}
P.~G. Da~Silva, D.~Ili{\'c}, and S.~Karnouskos, ``The impact of smart grid
  prosumer grouping on forecasting accuracy and its benefits for local
  electricity market trading,'' \emph{IEEE Transactions on Smart Grid}, vol.~5,
  no.~1, pp. 402--410, 2014.

\bibitem{rathnayaka2011innovative}
A.~D. Rathnayaka, V.~M. Potdar, and S.~J. Kuruppu, ``An innovative approach to
  manage prosumers in smart grid,'' in \emph{2011 World Congress on Sustainable
  Technologies (WCST)}.\hskip 1em plus 0.5em minus 0.4em\relax IEEE, 2011, pp.
  141--146.

\bibitem{cannon2014uber}
S.~Cannon and L.~H. Summers, ``How uber and the sharing economy can win over
  regulators,'' \emph{Harvard business review}, vol.~13, no.~10, pp. 24--28,
  2014.

\bibitem{zervas2017rise}
G.~Zervas, D.~Proserpio, and J.~W. Byers, ``The rise of the sharing economy:
  Estimating the impact of airbnb on the hotel industry,'' \emph{Journal of
  marketing research}, vol.~54, no.~5, pp. 687--705, 2017.

\bibitem{munoz2017mapping}
P.~Munoz and B.~Cohen, ``Mapping out the sharing economy: A configurational
  approach to sharing business modeling,'' \emph{Technological Forecasting and
  Social Change}, vol. 125, pp. 21--37, 2017.

\bibitem{cheng2016sharing}
M.~Cheng, ``Sharing economy: A review and agenda for future research,''
  \emph{International Journal of Hospitality Management}, vol.~57, pp. 60--70,
  2016.

\bibitem{lam2018more}
C.~Lam and M.~Liu, ``More than taxis with an app: How ride-hailing platforms
  promote market efficiency,'' 2018.

\bibitem{li2016demand}
Z.~Li, Y.~Hong, and Z.~Zhang, ``Do on-demand ride-sharing services affect
  traffic congestion? evidence from uber entry,'' \emph{Evidence from Uber
  Entry (August 30, 2016)}, 2016.

\bibitem{chen2015peeking}
L.~Chen, A.~Mislove, and C.~Wilson, ``Peeking beneath the hood of uber,'' in
  \emph{Proceedings of the 2015 Internet Measurement Conference}.\hskip 1em
  plus 0.5em minus 0.4em\relax ACM, 2015, pp. 495--508.

\bibitem{banerjee2016multi}
S.~Banerjee, D.~Freund, and T.~Lykouris, ``Multi-objective pricing for shared
  vehicle systems,'' \emph{arXiv preprint arXiv:1608.06819}, 2016.

\bibitem{stiglic2015benefits}
M.~Stiglic, N.~Agatz, M.~Savelsbergh, and M.~Gradisar, ``The benefits of
  meeting points in ride-sharing systems,'' \emph{Transportation Research Part
  B: Methodological}, vol.~82, pp. 36--53, 2015.

\bibitem{Sharing-twosided1}
Z.~Fang, L.~Huang, and A.~Wierman, ``Prices and subsidies in the sharing
  economy,'' \emph{Proceedings of the 26th International Conference on World
  Wide Web}, pp. 53--62, 2017.

\bibitem{Sharing-setprice1}
N.~Liu, X.~Yu, C.~Wang, and J.~Wang, ``Energy sharing management for microgrids
  with {PV} prosumers: A stackelberg game approach,'' \emph{IEEE Trans. Ind.
  Inform}, vol.~13, pp. 1088--1098, 2017.

\bibitem{cui2017distributed}
S.~Cui, Y.-W. Wang, and N.~Liu, ``Distributed game-based pricing strategy for
  energy sharing in microgrid with pv prosumers,'' \emph{IET Renewable Power
  Generation}, vol.~12, no.~3, pp. 380--388, 2017.

\bibitem{Sharing-setprice2}
N.~Liu, M.~Cheng, X.~Yu, J.~Zhong, and J.~Lei, ``Energy sharing provider for
  {PV} prosumer clusters: A hybrid approach using stochastic programming and
  stackelberg game,'' \emph{IEEE Trans. Ind. Elec.}, vol.~65, no.~8, pp.
  6740--6750, 2018.

\bibitem{tushar2016energy}
W.~Tushar, B.~Chai, C.~Yuen, S.~Huang, D.~B. Smith, H.~V. Poor, and Z.~Yang,
  ``Energy storage sharing in smart grid: A modified auction-based approach,''
  \emph{IEEE Transactions on Smart Grid}, vol.~7, no.~3, pp. 1462--1475, 2016.

\bibitem{fang2018loyalty}
Z.~Fang, L.~Huang, and A.~Wierman, ``Loyalty programs in the sharing economy:
  Optimality and competition,'' \emph{arXiv preprint arXiv:1805.03581}, 2018.

\bibitem{chen2018energy}
Y.~Chen, W.~Wei, F.~Liu, E.~E. Sauma, and S.~Mei, ``Energy trading and market
  equilibrium in integrated heat-power distribution systems,'' \emph{IEEE
  Transactions on Smart Grid}, 2018.

\bibitem{chen2018optimal}
Y.~Chen, W.~Wei, F.~Liu, M.~Shafie-khah, S.~Mei, and J.~P. Catal{\~a}o,
  ``Optimal contracts of energy mix in a retail market under asymmetric
  information,'' \emph{Energy}, vol. 165, pp. 634--650, 2018.

\bibitem{Sharing-twosided3}
P.~Dutta and A.~Boulanger, ``Game theoretic approach to offering participation
  incentives for electric vehicle-to-vehicle charge sharing,''
  \emph{Transportation Electrification Conference and Expo (ITEC), 2014 IEEE},
  pp. 1--5, 2014.

\bibitem{tang2017double}
L.~Tang, S.~He, and Q.~Li, ``Double-sided bidding mechanism for resource
  sharing in mobile cloud,'' \emph{IEEE Transactions on Vehicular Technology},
  vol.~66, no.~2, pp. 1798--1809, 2017.

\bibitem{cadre2018peer}
H.~L. Cadre, P.~Jacquot, C.~Wan, and C.~Alasseur, ``Peer-to-peer electricity
  market analysis: From variational to generalized nash equilibrium,''
  \emph{arXiv preprint arXiv:1812.02301}, 2018.

\bibitem{chen2018analyzing}
Y.~Chen, W.~Wei, F.~Liu, Q.~Wu, and S.~Mei, ``Analyzing and validating the
  economic efficiency of managing a cluster of energy hubs in multi-carrier
  energy systems,'' \emph{Applied energy}, vol. 230, pp. 403--416, 2018.

\bibitem{makowski1987vickrey}
L.~Makowski and J.~M. Ostroy, ``Vickrey-clarke-groves mechanisms and perfect
  competition,'' \emph{Journal of Economic Theory}, vol.~42, no.~2, pp.
  244--261, 1987.

\bibitem{Sharing-reallocation2}
P.~Chakraborty, E.~Baeyens, K.~Poolla, P.~P. Khargonekar, and P.~Varaiya,
  ``Sharing storage in a smart grid: A coalitional game approach,'' \emph{IEEE
  Trans. Smart Grid, early access}, 2018.

\bibitem{mei2019coalitional}
J.~Mei, C.~Chen, J.~Wang, and J.~L. Kirtley, ``Coalitional game theory based
  local power exchange algorithm for networked microgrids,'' \emph{Applied
  Energy}, vol. 239, pp. 133--141, 2019.

\bibitem{qi2017sharing}
W.~Qi, B.~Shen, H.~Zhang, and Z.-J.~M. Shen, ``Sharing demand-side energy
  resources-a conceptual design,'' \emph{Energy}, vol. 135, pp. 455--465, 2017.

\bibitem{yue2019arxiv}
Y.~Chen, S.~Mei, F.~Zhou, S.~H. Low, W.~Wei, and F.~Liu, ``An energy sharing
  game in prosumers based on generalized demand bidding: Model and
  properties,'' \emph{arXiv:1904.07829 [math.OC]}, 2019.

\bibitem{bose2019some}
S.~Bose and S.~H. Low, ``Some emerging challenges in electricity markets,'' in
  \emph{Smart Grid Control}.\hskip 1em plus 0.5em minus 0.4em\relax Springer,
  2019, pp. 29--45.

\end{thebibliography}

\clearpage
  
\appendices

\makeatletter
\@addtoreset{equation}{section}
\@addtoreset{theorem}{section}
\makeatother
\renewcommand{\theequation}{A.\arabic{equation}}
\renewcommand{\thetheorem}{A.\arabic{theorem}}

\section{Proof of Proposition \ref{Thm:prop-1}}
\begin{proof}
The KKT condition of the platform's sharing clearing problem \eqref{eq:platform} is 
\bsq
\label{eq:lowerKKT}
\bq
 \frac{2}{I}\lambda_i+a\eta+a\sum \limits_l \pi_{il}\alpha_l^{-}-a\sum \limits_l \pi_{il}\alpha_l^{+}=0:\epsilon_i \label{eq:lowerKKT.1}\\ 
  \sum \limits_i (a\lambda_i-b_i)=0:\delta \label{eq:lowerKKT.2}\\
 -F_l \le \sum \limits_i (\pi_{il} (-a\lambda_i+b_i)) \le F_l:\gamma_l^{-},\gamma_l^{+} \label{eq:lowerKKT.3}\\
 \alpha_l^{-} \ge 0, \alpha_l^{+} \ge 0:\phi_l^{-},\phi_l^{+} \label{eq:lowerKKT.4}\\
 \alpha_l^{-}[F_l+\sum \limits_i \pi_{il}(-a\lambda_i+b_i)]=0 :\rho_l^{-} \label{eq:lowerKKT.5}\\
 \alpha_l^{+}[F_l-\sum \limits_i \pi_{il}(-a\lambda_i+b_i)]=0 :\rho_l^{+} \label{eq:lowerKKT.6}
\eq
\esq
After replacing the lower-level problem with its KKT condition, the prosumer $i$'s problem comes down to a mathematical program with equilibrium constraints (MPEC). The sharing game \eqref{eq:platform}-\eqref{eq:sharing-upper} renders an equilibrium problem with equilibrium constraints (EPEC), and $(b^*,\lambda^*)$ is a strong stationary point of the EPEC if for each $i \in \mathcal{I}$, $(b^*_i,\lambda^*)$ is a strong stationary point for the MPEC for prosumer $i$ with fixed $b^*_{-i}$, and satisfies the following KKT conditions.
\bsq
\label{eq:sharing-KKT}
\bq
2c_i^kp_i^k+\beta_i &= & 0 \label{eq:sharing-KKT.1}\\ -2a\lambda_i+b_i-a\beta_i+\frac{2}{I}\epsilon_i+a\delta&& \nonumber\\
+ a\sum \limits_l \pi_{il}\gamma_{l}^{-} -a\sum \limits_l \pi_{il}\gamma_{l}^{+} 
 &= & 0 \label{eq:sharing-KKT.2}\\
\lambda_i+\beta_i-\delta-\sum \limits_l \pi_{il}\gamma_{l}^{-}+\sum \limits_l \pi_{il}\gamma_{l}^{+} & = & 0 \label{eq:sharing-KKT.3}\\
\frac{2}{I}\epsilon_j+a\delta+a\sum \limits_l \pi_{jl}\gamma_l^{-}-a\sum \limits_l \pi_{jl}\gamma_l^{+} & = & 0 \label{eq:sharing-KKT.4}\\
a\sum \limits_i \epsilon_i & = & 0 \label{eq:sharing-KKT.5}\\
a(\sum \limits_l \pi_{il})(\sum \limits_i \epsilon_i)-\phi_{l}^{-}& = & 0 \label{eq:sharing-KKT.6}\\
-a(\sum \limits_l \pi_{il})(\sum \limits_i \epsilon_i)-\phi_{l}^{+} & = & 0 \label{eq:sharing-KKT.7}\\
l \in \mathbb{I}_1: [F_l+\sum \limits_l \pi_{il}(-a\lambda_i+b_i)] & = & 0 \\
l \notin \mathbb{I}_1:  0 \le [F_l+\sum \limits_l \pi_{il}(-a\lambda_i+b_i)]\perp \gamma_l^{-} & \ge & 0 \\
l \in \mathbb{I}_2: [F_l-\sum \limits_l \pi_{il}(-a\lambda_i+b_i)] & = & 0\\
l \notin \mathbb{I}_2: 0 \le [F_l-\sum \limits_l \pi_{il}(-a\lambda_i+b_i)]\perp \gamma_l^{+} & \ge & 0 \label{eq:sharing-KKT.9}\\
l \in \mathbb{I}_3:  \alpha^{-}_l  & = & 0\\
l \notin \mathbb{I}_3: 0 \le \alpha^{-}_l \perp \phi_{l}^{-} & \ge & 0 \label{eq:sharing-KKT.10}\\
l \in \mathbb{I}_4: \alpha^{+}_l & = & 0\\
l \notin \mathbb{I}_4: 0 \le \alpha^{+}_l \perp \phi_{l}^{+} & \ge & 0 \label{eq:sharing-KKT.11} \\
l \in \mathbb{I}_1 \bigcap \mathbb{I}_3: \gamma_l^{-} \ge 0, \phi_{l}^{-} & \ge & 0\\
l \in \mathbb{I}_2 \bigcap \mathbb{I}_4: \gamma_l^{+} \ge 0, \phi_{l}^{+} & \ge & 0
\eq
\esq
Together with the balancing constraint in \eqref{eq:sharing-upper} and \eqref{eq:lowerKKT}. Here, the sets $\mathbb{I}_1$, $\mathbb{I}_2$, $\mathbb{I}_3$ and $\mathbb{I}_4$ are defined as
\bq
& \mathbb{I}_1:=\{l|F_l+\sum \limits_l \pi_{il}(-a\lambda_i+b_i)=0\} \nonumber\\
& \mathbb{I}_2:=\{l|F_l-\sum \limits_l \pi_{il}(-a\lambda_i+b_i)=0\} \nonumber\\
& \mathbb{I}_3:=\{l|\alpha^{-}_l=0\} \nonumber\\
& \mathbb{I}_4:=\{l|\alpha^{+}_l=0\} \nonumber
\eq

A stationary point of the EPEC problem \eqref{eq:platform}-\eqref{eq:sharing-upper} corresponds to an isolated GNE of the sharing game. However, even though the KKT condition \eqref{eq:sharing-KKT} is not met, it is still possible that $b^*$ is a GNE, but not an isolated one.  

The KKT condition of problem \eqref{eq:central} is
\bsq
\label{eq:central-KKT}
\bq
2c_i^kp_i^k+\frac{\sum_k p_i^k-D_i}{a(I-1)}+\kappa+\sum \limits_l \pi_{il}\tau_l^{-}-\sum \limits_l \pi_{il}\tau_l^{+}=0\\
\sum \limits_i \sum \limits_k p_i^k =\sum \limits_i D_i\\
0 \le (\sum \limits_i \pi_{il}(D_i-\sum \limits_k p_i^k)+F_l) \perp \tau_l^{-} \ge 0\\
0 \le (-\sum \limits_i \pi_{il}(D_i-\sum \limits_k p_i^k)+F_l) \perp \tau_l^{+} \ge 0
\eq
\esq

If $b^*$ is an isolated GNE of the sharing game and $p^*$ the corresponding optimal adjustment. Then it satisfies the KKT conditions \eqref{eq:sharing-KKT}. If we add up \eqref{eq:sharing-KKT.2} and  \eqref{eq:sharing-KKT.4} for all $j \ne i$, together with \eqref{eq:sharing-KKT.5}, we have
\bq
\label{eq8}
-2a\lambda_i^*+b_i^*-a\beta_i^*+aI\delta^* \nonumber &&\\
+aI\sum \limits_l \pi_{il}\gamma_{l}^{*-}-aI\sum \limits_l \pi_{il}\gamma_{l}^{*+} &=& 0
\eq  
\eqref{eq8}+$aI$\eqref{eq:sharing-KKT.3} gives
\bq
a(I-1)\lambda_i^*+D_i-\sum \limits_k p_i^{k*}+a(I-1)\beta_i^*=0
\eq
With \eqref{eq:sharing-KKT.1} we have
\bq
2c_i^kp_i^{k*}+\frac{\sum_k p_i^{k*}-D_i}{a(I-1)}=\lambda_i^*
\eq
From \eqref{eq:lowerKKT.1}, we know that 
\bq
\frac{2}{I}[2c_i^kp_i^{k*}+\frac{\sum_k p_i^{k*}-D_i}{a(I-1)}]+a\sum \limits_l \pi_{il}\alpha_l^{*-}-a\sum \limits_l \pi_{il}\alpha_l^{*+}=-a\eta^*
\eq
which is a constant among all prosumers. Let $\kappa=\frac{aI}{2}\eta^*$, $\tau_l^{\pm}=\frac{aI}{2}\alpha_l^{*\pm}$, it is easy to check that the KKT conditions \eqref{eq:central-KKT} are met. And obviously we have 
$b_i^*=\tilde b_i(p_i^*)$. Since problem \eqref{eq:central} is a strictly convex optimization problem, its optimal solution is unique, so as the isolated GNE. This completes the proof.
\end{proof}

\renewcommand{\theequation}{B.\arabic{equation}}
\renewcommand{\thetheorem}{B.\arabic{theorem}}
\section{Proof of Proposition \ref{Thm:prop-2}}
\begin{proof}
Define $y_i=-a\lambda_i+b_i$, then the sharing problem is equivalent to Prosumer $i$'s decision-making:
\bsq
\label{eq:sharing-upper-m}
\bq
\mathop {\min} \limits_{p_i^k,\forall k,b_i} && \sum \limits_k c_i^k (p_i^k)^2+\frac{1}{a}y_i(b)(-y_i(b)+b_i) \label{eq:sharing-upper-m.1}\\
\rm{s.t.}&& \sum \limits_k p_i^k+y_i=D_i \label{eq:sharing-upper-m.2}
\eq
\esq

Sharing platform's problem:
\bsq
\label{eq:sharing-lower-m}
\bq
\mathop {\min} \limits_{y_i,\forall i} && \sum \limits_i (y_i-b_i)^2 \label{eq:sharing-lower-m.1}\\
\rm{s.t.}&& \sum \limits_i y_i=0 \label{eq:sharing-lower-m.2}\\
&& -F_l \le \sum \limits_i \pi_{il} y_i \le F_l  \label{eq:sharing-lower-m.3}
\eq
\esq

 Before proving Proposition \ref{Thm:prop-2}, we first give the following lemmas. Fix other prosumers' strategies $b_{-i}$ and varies $b_i$. Suppose $y_i^*(b_i)$ is the optimal solution given by \eqref{eq:sharing-lower-m}. Denote the objective function of \eqref{eq:sharing-lower-m} as $g(y,b)$ and $g(y,b)=||y-b||_2$.
\begin{lemma}
\label{lemma-1}
$y_i^*(b_i)$ is continuous in $b_i$.
\end{lemma}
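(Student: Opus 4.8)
The plan is to recognize the platform's clearing problem \eqref{eq:sharing-lower-m} as a Euclidean projection of the bid vector $b$ onto a \emph{fixed} convex set, and then to exploit the nonexpansiveness of the projection operator. First I would observe that the feasible region
\[
Y := \left\{ y \;\middle|\; \sum_i y_i = 0,\ -F_l \le \sum_i \pi_{il} y_i \le F_l,\ \forall l \right\}
\]
does not depend on $b$ at all: it is a closed convex polyhedron, and it is nonempty since $y=0 \in Y$ (using $F_l \ge 0$). The objective $\sum_i (y_i-b_i)^2 = \|y-b\|_2^2$ is strictly convex in $y$ because its Hessian is a positive multiple of the identity matrix. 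Hence the minimizer over the closed convex set $Y$ exists and is unique for every $b$, and by definition it is precisely $y^*(b)=P_Y(b)$, the Euclidean projection of $b$ onto $Y$. Minimizing the norm $g(y,b)=\|y-b\|_2$ or its square gives the same argmin, so this identification is legitimate.

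Second, I would invoke the standard nonexpansiveness of the metric projection onto a closed convex set: for any two bid vectors $b,b'$,
\[
\|P_Y(b)-P_Y(b')\|_2 \le \|b-b'\|_2 .
\]
This shows the full vector map $b \mapsto y^*(b)$ is globally $1$-Lipschitz, hence continuous. Since $y_i^*$ is a single coordinate of $y^*$, and since varying $b_i$ with $b_{-i}$ held fixed traces a continuous path in $b$-space, continuity of the vector map $y^*(\cdot)$ immediately yields continuity of $y_i^*$ as a function of $b_i$, which is the claim.

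There is essentially no hard step here: the only facts to check are the nonemptiness of $Y$ (feasibility of the clearing problem) and the strict convexity of the objective, both immediate. The mild subtlety worth flagging is that $Y$ may be \emph{unbounded}, which is why I would route the argument through the projection theorem rather than through Berge's maximum theorem; the projection operator is well defined and nonexpansive on an arbitrary closed convex set, so no compactness is needed. I would present the projection argument as the main line, both because it sidesteps the unboundedness issue and because it delivers the stronger Lipschitz property, which is convenient when the later lemmas reason about continuity and monotonicity of $y_i^*(b_i)$.
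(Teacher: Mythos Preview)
Your proof is correct and takes a genuinely different route from the paper. The paper invokes Berge's Maximum Theorem: since $g$ is jointly continuous and the feasible correspondence $C$ is constant in $b_i$, the argmax correspondence is upper hemi-continuous, and strict convexity then upgrades this to continuity of the (single-valued) minimizer. You instead identify the clearing problem as a Euclidean projection onto the fixed closed convex polyhedron $Y$ and use the nonexpansiveness of $P_Y$ to obtain global $1$-Lipschitz continuity of $y^*(b)$, hence of $y_i^*(b_i)$.

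Your approach has two advantages worth noting. First, Berge's theorem requires the correspondence to be compact-valued; the paper asserts this from the fact that the feasible set is independent of $b_i$, but that alone does not supply boundedness of $Y$, which in general depends on the rank of the PTDF matrix together with the balance constraint. Your projection argument sidesteps this entirely, since the Hilbert projection theorem needs only closedness and convexity. Second, you obtain Lipschitz continuity rather than bare continuity, which is strictly stronger and dovetails with the quantitative bound $0 \le \Delta y_i^*/\Delta b_i \le (I-1)/I$ established immediately afterward in Lemma~\ref{lemma-2}. The paper's route, on the other hand, is the more standard parametric-optimization template and would generalize to objectives that are not squared distances.
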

\begin{proof}
Recall the \emph{Maximum Theorem}, which says: Let $Y$ and $B_i$ be metric spaces, $g:Y \times B_i \rightarrow \mathbb{R}$ be a function jointly continuous in its two arguments, and $C: B_i \twoheadrightarrow Y$  be a compact-valued correspondence, let
$$
y^*(b_i)={\rm{argmax}}\{-g(y,b_i)|y \in C(b_i)\}
$$
If $C$ is continuous at some point $b_i$, then $y^*$ is non-empty, compact-valued, and upper hemi-continuous at $b_i$.

Here, first $g(y,b_i,b_{-i})$ is continuous both in $y$ and $b_i$. The feasible region of \eqref{eq:sharing-lower-m} is independent of $b_i$ and thus $C$ is a a compact-valued correspondence. Therefore, $y^*(b_i)$ is upper hemi-continuous at $b_i$. As \eqref{eq:sharing-lower-m} is a strictly convex optimization problem and has a unique solution, so upper hemi-continuous implies continuous.

\end{proof}
\begin{lemma}
\label{lemma-2}
We have
\bq
0 \le \frac{\Delta y_i^*}{\Delta b_i} \le \frac{I-1}{I}
\eq
\end{lemma}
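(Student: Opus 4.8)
The plan is to recognize that the platform's subproblem \eqref{eq:sharing-lower-m} computes the Euclidean projection of the bid vector onto a fixed polyhedron: since its objective $\sum_i(y_i-b_i)^2 = \|y-b\|_2^2$ is minimized over the $b$-independent convex set $P := \{y \mid \sum_i y_i = 0,\ -F_l \le \sum_i \pi_{il}y_i \le F_l,\ \forall l\}$, the minimizer is $y^*(b) = \mathrm{Proj}_P(b)$. This set $P$ is nonempty, closed and convex, so the projection is unique (consistent with Lemma \ref{lemma-1}) and, crucially, \emph{firmly nonexpansive}. I would fix $b_{-i}$, take two values $b_i^1, b_i^2$, write $b^1,b^2$ for the corresponding full bid vectors (which differ only in coordinate $i$), and set $y^1 = y^*(b^1)$, $y^2 = y^*(b^2)$, $\Delta b := b^2-b^1 = (b_i^2-b_i^1)\,e_i$ and $\Delta y := y^2-y^1$.

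The first key step is the firm-nonexpansiveness inequality $\langle \Delta b,\Delta y\rangle \ge \|\Delta y\|_2^2$. It follows by adding the two variational-inequality characterizations of projection, $\langle b^1-y^1, y-y^1\rangle \le 0$ and $\langle b^2-y^2, y-y^2\rangle \le 0$, tested at $y=y^2$ and $y=y^1$ respectively, and subtracting. Because $\Delta b$ is supported on coordinate $i$, the left-hand side equals $(b_i^2-b_i^1)\,\Delta y_i$, so $(b_i^2-b_i^1)\,\Delta y_i \ge \|\Delta y\|_2^2 \ge 0$. This already forces $\Delta y_i$ and $b_i^2-b_i^1$ to share a sign, which is precisely the lower bound $\Delta y_i^*/\Delta b_i \ge 0$.

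The second key step — the one that pins down the sharp constant rather than the trivial value $1$ — is to invoke the balancing constraint. Since $y^1,y^2 \in P$ both satisfy $\sum_j y_j = 0$, the increment obeys $\sum_j \Delta y_j = 0$, hence $\Delta y_i = -\sum_{j\ne i}\Delta y_j$. Cauchy--Schwarz then gives $(\Delta y_i)^2 \le (I-1)\sum_{j\ne i}(\Delta y_j)^2$, so that $\|\Delta y\|_2^2 = (\Delta y_i)^2 + \sum_{j\ne i}(\Delta y_j)^2 \ge \frac{I}{I-1}(\Delta y_i)^2$. Combining this with $(b_i^2-b_i^1)\,\Delta y_i \ge \|\Delta y\|_2^2$ yields $(b_i^2-b_i^1)\,\Delta y_i \ge \frac{I}{I-1}(\Delta y_i)^2$; treating $\Delta y_i = 0$ trivially and otherwise cancelling $|\Delta y_i|$ (using the matched sign from the previous step) delivers $|\Delta y_i| \le \frac{I-1}{I}\,|b_i^2-b_i^1|$, i.e. $\Delta y_i^*/\Delta b_i \le \frac{I-1}{I}$.

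I expect the main obstacle to be exactly obtaining the sharp upper bound $\frac{I-1}{I}$: the generic nonexpansiveness of the projection only yields $\le 1$, and the improvement hinges on noticing that the always-active balancing hyperplane $\sum_j y_j = 0$ confines $\Delta y$ to the subspace orthogonal to $\mathbf{1}$, which is what the Cauchy--Schwarz step quantifies. Equivalently, a parametric-QP sensitivity argument shows that on each region of fixed active set $y^*(b) = P_S b + \mathrm{const}$ with the tangent subspace $S \subseteq \{z \mid \mathbf{1}^\top z = 0\}$, whence $\partial y_i^*/\partial b_i = e_i^\top P_S e_i = \|P_S e_i\|_2^2 \in [0,\,1-\tfrac1I]$; but the variational-inequality route avoids enumerating active sets and produces the finite-difference bound directly, which is why I would prefer it.
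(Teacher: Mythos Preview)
Your argument is correct. Both bounds follow exactly as you derive them: firm nonexpansiveness of the Euclidean projection onto $P$ gives $(b_i^2-b_i^1)\,\Delta y_i \ge \|\Delta y\|_2^2$, and the balancing constraint $\mathbf 1^\top \Delta y = 0$ together with Cauchy--Schwarz yields $\|\Delta y\|_2^2 \ge \tfrac{I}{I-1}(\Delta y_i)^2$, which combine to give the sharp upper bound.

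The route, however, differs from the paper's. The paper argues both inequalities by contradiction with explicit constructions: for the lower bound it tests the midpoint $y^*+\tfrac12\Delta y^*$ and compares objective values via Jensen; for the upper bound it relies on the piecewise-linear structure of the projection onto a polyhedron, uses the orthogonality condition $(y^*-b)\perp \Delta y^*$ on a linear piece, introduces an auxiliary point $\tilde y$ and a scalar $\alpha$, and shows $\alpha<1$ using essentially the same Cauchy--Schwarz estimate $\sum_j(\Delta y_j^*)^2\ge \tfrac{I}{I-1}(\Delta y_i^*)^2$ you invoke. So the decisive quantitative ingredient is shared, but you reach it through the one-line variational inequality for projections rather than through active-set geometry. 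Your approach is shorter, avoids the piecewise-linear assumption (it works for any closed convex $P\subseteq\{\mathbf 1^\top y=0\}$), and delivers a global finite-difference bound directly; the paper's approach is more hands-on but in the process records the piecewise-linear structure and the orthogonality relation $(y^*-b)\perp \Delta y^*$ that are reused later in the proof of Proposition~\ref{Thm:prop-2-2}.
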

\begin{proof}
For prosumer $i$, fix $b_{-i}$. Assume that $b_i$ changes to $b_i+\Delta b_i$ and $y_i^*$ changes to $y_i^*+\Delta y_i^*$ accordingly. Without loss of generality, suppose $\Delta b_i>0$. 

First, we prove the left hand side inequality. If $\Delta y_i^*<0$, then construct $y^{'}=y^*+\frac{1}{2}\Delta y^*$. Then 
\bq
&& g(y^*,b)+g(y^*+\Delta y^*, b+\Delta b) \nonumber\\
= && (y_i^*-b_i)^2+\sum \limits_{j \ne i}(y_j^*-b_j)^2+ (y_i^*+\Delta y_i^*-b_i-\Delta b_i)^2 \nonumber\\
&& +\sum \limits_{j \ne i}(y_j^*+\Delta y_j^*-b_j)^2
\eq
\bq
\!\!&& \!\!\!\!\! g(y^*+\frac{1}{2}\Delta y^*,b)+g(y^*+\frac{1}{2}\Delta y^*,b+\Delta b) \nonumber\\
= \!\!&& \!\!\!\!\!(y_i^*+\frac{1}{2}\Delta y_i^*-b_i)^2+\sum \limits_{j \ne i} (y_j^*+\frac{1}{2}\Delta y_j^*-b_j)^2 \nonumber\\
\!\!&& \!\!\!\!\! + (y_i^*+\frac{1}{2}\Delta y_i^*-b_i-\Delta b_i)^2+\sum \limits_{j \ne i} (y_j^*+\frac{1}{2}\Delta y_j^*-b_j)^2
\eq
According to Jensen Inequality, we have
\bq
\frac{(y_j^*-b_j)^2+(y_j^*+\Delta y_j^*-b_j)^2}{2} \ge (y_j^*+\frac{1}{2}\Delta y_j^*-b_j)^2
\eq
and because
\bq
&& (y_i^*-b_i)^2+(y_i^*+\Delta y_i^*-b_i-\Delta b_i)^2-(y_i^*+\frac{1}{2}\Delta y_i^*-b_i)^2 \nonumber\\
&& -(y_i^*+\frac{1}{2}\Delta y_i^*-b_i-\Delta b_i)^2 \nonumber\\
&=& \frac{(\Delta y_i^*)^2}{2}-\Delta y_i^* \Delta b_i >0
\eq
As a result, we have 
\bq
&& g(y^*,b)+g(y^*+\Delta y^*, b+\Delta b) \nonumber\\
&> & g(y^*+\frac{1}{2}\Delta y^*,b)+g(y^*+\frac{1}{2}\Delta y^*,b+\Delta b)
\eq
which is contradict to the assumption that $y^* $ is the optimal solution corresponding to $b$ and $y^*+\Delta y^*$ is the optimal solution corresponding to $b+\Delta b$. So we have $\Delta y_i^*/\Delta b_i \ge 0$.

Next, we prove the right hand side inequality. If we have $\Delta y_i^*/\Delta b_i > (I-1)/I$. First, because the feasible region of \eqref{eq:sharing-lower-m} is a polyhedron and $g(y,b)=||y-b||_2$, so the optimal $y$ is piece-wise linear in $b$. In each segment with the direction of $\Delta y^{'}$, $y^{'}$ is the optimal corresponding to $b^{'}$ when and only when $(y^{'}-b^{'})\perp \Delta y^{'}$. When $\Delta b$ is small enough, $y^*$ and $y^*+\Delta y^*$ fall in the same segment. Due to the optimality, we have $(y^*-b) \perp \Delta y^*$ and $(y^*+\Delta y^*-b^*-\Delta b^*) \perp \Delta y^*$.

Denote a vector $\tilde{y}=(y_1^*-\frac{1}{I}\Delta b_i,...,y_i^*+\frac{I-1}{I}\Delta b_i,,...,y_I^*-\frac{1}{I}\Delta b_i)$. Suppose $\check y=(y_1^*+\alpha \Delta y_1^*,..., y_i^*+\alpha \Delta y_i^*, ..., y_I^*+\alpha \Delta y_I^*)$. $\alpha$ is a parameter such that $(\check y-\tilde{y}) \perp \Delta y^*$, which means
\bq
&& (\alpha \Delta y_1^*+\frac{1}{I}\Delta b_i,...,\alpha \Delta y_i^*-\frac{I-1}{I}\Delta b_i,,...,\alpha \Delta y_I^*+\frac{1}{I}\Delta b_i) \nonumber\\
& \cdot & (\Delta y_1^*,..., \Delta y_i^*,...,\Delta y_I^*) =0
\eq
We can get
\bq
\label{eq:alpha}
0 \le \alpha & = &\frac{\Delta b_i \Delta y_i^*}{\sum \limits_j (\Delta y_j^*)^2} \nonumber\\
& < & \frac{\Delta b_i\Delta y_i^*}{(\Delta y_i^*)^2+\frac{1}{I-1}(\Delta y_i^*)^2} \nonumber\\
& < & 1
\eq
So that $\check{y}$ is a feasible point for problem \eqref{eq:sharing-lower-m} and $\check{y} \ne (y^*+\Delta y^*)$. Since $(y^*-b)\perp\Delta y^*$ and $\sum_i \Delta y_i^*=0$, so we have
\bq
&& [(b+\Delta b)-\tilde{y}] \cdot \Delta y^* \nonumber\\
& = & (b_1-y_1^*+\frac{\Delta b_i}{I},...,b_I-y_I^*+\frac{\Delta b_i}{I}) \nonumber\\
&& \cdot (\Delta y_1^*, \Delta y_2^*,...,\Delta y_I^*) \nonumber\\
& = & 0
\eq
As a result, we have $(\check{y}-b-\Delta b) \perp \Delta y^*$ and $(y^*+\Delta y^*-b^*-\Delta b^*) \perp \Delta y^*$, which is contradict to  $\check{y} \ne (y^*+\Delta y^*)$. This completes the proof.
\end{proof}

With \textbf{Lemma \ref{lemma-1}} and \textbf{Lemma \ref{lemma-2}}, we can prove Proposition \ref{Thm:prop-2}. $u_i(\lambda_i,b_i,p_i)$ can be rewritten as
\bq
&& u_i(y_i,b_i,p_i) \nonumber\\
& = & max\{\frac{1}{a}y_i(-y_i+b_i),(md_i+\frac{\sum_k p_i^k-D_i}{a(I-1)})y_i\} \nonumber
\eq 

We only prove the case when $y_i>0$, the other situation when $y_i \le 0$ can be proven in the same way. Because $y_i^*(b_i)$ is a piece-wise linear function of $b_i$, so in each segment, $y_i^*(b_i)$ is derivable. 

If $y_i>0$ and $\frac{1}{a}y_i(-y_i+b_i)>( md_i+\frac{p_i-D_i}{a(I-1)})(D_i-p_i)$, which means
\bq
\frac{1}{a}(-y_i+b_i)>2\bar c_i (D_i-y_i)-\frac{y_i}{a(I-1)}
\eq
The left-hand side is increasing in $b_i$ while the right-hand side is decreasing in $b_i$, two parts are equal when and only when $b_i=b_i^*$.
Then if $b_i>b_i^*$, we have $\Gamma_i(y_i,b_i,p_i)=(\sum_k c_i^k(p_i^k)^2+\frac{1}{a}y_i(-y_i+b_i))_{max}=\bar c_i(D_i-y_i)^2+\frac{1}{a}y_i(-y_i+b_i)$.
\bq
&& \frac{d \Gamma_i}{d b_i} \nonumber\\
&= & -2\bar c_i(D_i-y_i)\frac{dy_i}{db_i}-\frac{2}{a}y_i\frac{dy_i}{db_i}+\frac{y_i}{a}+\frac{b_i}{a}\frac{dy_i}{db_I} \nonumber\\
& \ge & -\frac{y_i}{a(I-1)}\frac{dy_i}{db_i}-\frac{y_i}{a}\frac{dy_i}{db_i}+\frac{y_i}{a} \nonumber\\
& \ge & -\frac{y_i}{a}+\frac{y_i}{a} \ge 0
\eq
It means when $b_i>b_i^*$, in each segment, the cost function is increasing; and because the cost function is continuous, so $\Gamma_i$ is increasing when $b_i>b_i^*$.

If $b_i<b_i^*$, then $\Gamma_i(y_i,b_i,p_i)=\sum_k c_i^k(p_i^k)^2+(md_i+\frac{\sum_k p_i^k-D_i}{a(I-1)})(D_i-p_i)=\bar c_i(D_i-y_i)^2+(2\bar c_i(D_i-y_i)-\frac{y_i}{a(I-1)})y_i$.
\bq
&& \frac{d\Gamma_i}{db_i} \nonumber\\
& = & -2\bar c_i(D_i-y_i)\frac{dy_i}{db_i}+2\bar c_iD_i\frac{dy_i}{db_i}-4\bar c_iy_i\frac{dy_i}{db_i}-\frac{2y_i}{a(I-1)}\frac{dy_i}{db_i}\nonumber\\
& = & -2\bar c_iy_i\frac{dy_i}{db_i}-\frac{2y_i}{a(I-1)}\frac{dy_i}{db_i} \le 0
\eq
This implies that when $b_i<b_i^*$, $\Gamma_i$ is decreasing. In conclusion, the cost $\Gamma_i$ reaches its minimum at $b_i=b_i^*$.

We have proved that $b^*$ is a GNE of the sharing game. Next, we prove that it is the unique one. Given any $b_{-i}$, it has been proved that the optimal strategy of prosumer $i$ is to choose a $b_i$ such that the resulting $y_i$, $p_i$ satisfies that
$$b_i=2a\bar{c}_i\sum_k p_i^k+\frac{I-2}{(I-1)}(D_i-\sum_k p_i^k), \;\; y_i=D_i-\sum_k p_i^k$$
Then we show that this can always be achieved. Assume $y^{*'}$ is the optimal solution of the following optimization problem
\bq
\mathop{\min} \limits_i && \frac{(\frac{1}{I-1}y_i+2a\bar{c}_iy_i-2a\bar{c}_iD_i)^2}{2a\bar{c}_i+\frac{1}{I-1}}+\sum \limits_{j \ne i} (y_j-b_j)^2 \\
\rm{s.t.} &&  \sum \limits_i y_i=0\\
&& -F_l \le \sum \limits_i \pi_{il}y_i \le F_l
\eq
If we let $b_i=2a\bar{c}_i(D_i-y_i)+\frac{I-2}{(I-1)}y_i$, it is easy to check that $y_i-b_i=(\frac{1}{I-1}+2a\bar{c}_i)y_i-2a\bar{c}_iD_i$.

By comparing the KKT condition of the lower level problem \eqref{eq:sharing-lower-m} and problem \eqref{eq:central2}, it is easy to show that $p^*$ is the optimal solution of \eqref{eq:central2}, which is unique. This completes the proof.
\end{proof}

\renewcommand{\theequation}{C.\arabic{equation}}
\renewcommand{\thetheorem}{C.\arabic{theorem}}
\section{Proof of Proposition \ref{Thm:prop-2-2}}
\begin{proof}
When there is no congestion, it degenerates to the situation in \cite{yue2019arxiv}, where a unique NE always exists. When there is congestion,
if there exists an IGNE, according to Proposition \ref{Thm:prop-1}, we have $\frac{1}{a}y_i^*(-y_i^*+b_i^*)=(2\bar c_i\sum_k p_i^{k*}+\frac{\sum_k p_i^{k*}-D_i}{a(I-1)})(D_i-\sum_k p_i^{k*})$. The derivative $\frac{d\Gamma_i}{db_i}$ must be zero at the optimal point. Thus, we have
\bq
&& \frac{d \Gamma_i}{d b_i} \nonumber\\
&= & -2\bar c_i(D_i-y_i)\frac{dy_i}{db_i}-\frac{2}{a}y_i\frac{dy_i}{db_i}+\frac{y_i}{a}+\frac{b_i}{a}\frac{dy_i}{db_I} \nonumber\\
& = & -\frac{y_i}{a(I-1)}\frac{dy_i}{db_i}-\frac{y_i}{a}\frac{dy_i}{db_i}+\frac{y_i}{a} 
\eq
 $\frac{d\Gamma_i}{db_i}=0$ when and only when $\frac{dy_i^*}{db_i^*}=\frac{I-1}{I}$. According to \eqref{eq:alpha}, we have $\frac{dy_j^*}{db_i^*}=-\frac{1}{I}$. Symmetrically, we have
 \bq
 \frac{dy_i^*}{db_i^*}=\frac{I-1}{I},\forall i \nonumber\\
 \frac{dy_j^*}{db_i^*}=\frac{-1}{I},\forall i, \forall j \ne i
 \eq
 Without loss of generality, assume the binding constraint is
 \bq
 \sum \limits_i \pi_{il^{'}} y_i =F_{l^{'}}
 \eq                                                                                                       
 Take the derivative with respect to $b_i,\forall i$, it is easy to conclude that $\pi_{il^{'}},\forall i$ equal the same $\pi_{l^{'}}$. Suppose line $l^{'}$ links the bus $n$ and $m$, then 
 \bq
 \label{eq:contradict}
 [\pi_{l^{'}},...,\pi_{l^{'}}]=[0,...,\mathop 1\limits_n ,...,\mathop { - 1}\limits_m ,...,0]B^{-1}
 \eq
 where $B$ is a full-rank matrix with $B_{ij}=-\frac{1}{x_{ij}}$ and $B_{ii}=\frac{1}{x_{i0}}+\sum_{j \ne i}\frac{1}{x_{ij}}$, $x_{ij}>0,\forall i,j$. With \eqref{eq:contradict} we can get that
 \bq
  [\pi_{l^{'}},...,\pi_{l^{'}}]B=[0,...,\mathop 1\limits_n ,...,\mathop { - 1}\limits_m ,...,0]
 \eq
 which implies $x_{i0}=\frac{1}{\pi_{l^{'}}}$ and $x_{j0}=-\frac{1}{\pi_{l^{'}}}$, which is contradict to $x_{ij}>0,\forall i,j$. As a result, when there is congestion, no IGNE exists. This completes the proof.
\end{proof}

\renewcommand{\theequation}{D.\arabic{equation}}
\renewcommand{\thetheorem}{D.\arabic{theorem}}
\section{Proof of Proposition \ref{Thm:prop-3}}
\begin{proof}
Given other prosumers' strategies ($j \ne i$), first we solve the following optimization problem
\bsq
\label{eq:sharing-N-1}
\bq
\mathop {\min} \limits_{\lambda_j,j \ne i} && \frac{1}{I}\sum \limits_{j \ne i} \lambda_j^2\\
&& \sum \limits_{j \ne i} (a\lambda_j-b_j)=0:\eta^{'}\\
&& -F_l \le \sum \limits_{j \ne i} \pi_{jl} (-a\lambda_j+b_j) \le F_l:\alpha_l^{-'},\alpha_l^{+'}
\eq
\esq
Suppose the optimal solution and the corresponding dual variables are $\lambda_j^*$, $\eta^{'*}$,$\alpha_l^{\pm'*}$. Then if prosumer $i$ bids $b_i=\frac{a^2I}{2}(-\eta^{'*}-\sum_l \pi_{il}\alpha_l^{-'*}+\sum_l \pi_{il}\alpha_l^{+'*})$, then by comparing the KKT condition \eqref{eq:lowerKKT.1}, it is easy to check the corresponding $a\lambda_i=-b_i$. At this time, the cost of prosumer $i$ is the same as when he makes decision individually. Since each prosumer $i$ solves an minimization problem, a Pareto improvement can be achieved. This completes the proof.
\end{proof}

\renewcommand{\theequation}{E.\arabic{equation}}
\renewcommand{\thetheorem}{E.\arabic{theorem}}
\section{Proof of Proposition \ref{Thm:prop-4}}
\begin{proof}
If $b^*$ is the GNE of the sharing game \eqref{eq:platform} and \eqref{eq:modified-obj}, then it satisfies the KKT conditions \eqref{eq:sharing-KKT}. Then in Appendix A, we prove show that 
\bq
\lambda_i^* & = & 2c_i^kp_i^k+\frac{\sum_k p_i^k-D_i}{a(I-1)} \nonumber\\
& = & \frac{aI}{2}[-\eta^*-\sum \limits_l \pi_{il}\alpha_l^{*-}+\sum \limits_l \pi_{il}\alpha_l^{*+}] \nonumber\\
& = & -\kappa^*-\sum \limits_l \pi_{il}\tau_l^{*-}+\sum \limits_l \pi_{il}\tau_l^{*+} 
\eq
This completes the proof.
\end{proof}

\renewcommand{\theequation}{F.\arabic{equation}}
\renewcommand{\thetheorem}{F.\arabic{theorem}}
\section{Proof of Proposition \ref{Thm:prop-5}}
\begin{proof}
As $b^*$ is the GNE of the sharing game \eqref{eq:platform} and \eqref{eq:modified-obj}, $p^*$ the corresponding optimal adjustment, according to Proposition \ref{Thm:prop-1}, $p^*$ is the optimal solution of problem \eqref{eq:central2}. $\bar p$ is the optimal solution of problem \eqref{eq:AGG}. The feasible region of problem \eqref{eq:central2} and \eqref{eq:AGG} are the same, so $p^*$ is a feasible solution of problem \eqref{eq:AGG}. Due to the optimality, we have
$$\sum \nolimits_{i \in \mathcal{I}} f_i(p_i^*(I)) \ge \sum \nolimits_{i \in \mathcal{I}} f_i(\bar p_i(I))$$

Then, we prove the second statement. The net injection of prosumer $i$ is $\sum_k p_i^k-D_i$, which is sum of the energy flows of lines connected to $i$. Denote $\hat{F}:=\max\{F_l,\forall l\}$. According to Assumption \ref{assumption-1}, the degree of node $i$ is no more than $G^D$ and the flow of each line is within $[-\hat F,\hat F]$, so
\bq
-G^D\hat{F} \le \sum_k p_i^k-D_i \le G^D\hat{F}
\eq

For any given $\epsilon>0$, we choose a large enough number $I_0:=\frac{a\epsilon}{G^D\hat{F}}+1$. Then for arbitrary number $I>I_0$, there is
\bq
&& \frac{1}{I}[\sum \limits_i f_i(p_i^*(I))-\sum \limits_i f_i(\bar p_i(I))] \nonumber\\
& = &  \frac{1}{I}\sum \limits_i [f_i(p_i^*(I))+\frac{(D_i-\sum_k p_i^{k*})^2}{2a(I-1)} \nonumber\\
&& -f_i(\bar p_i(I))- \frac{(D_i-\sum_k p_i^{k*})^2}{2a(I-1)}] \nonumber\\
& \le & \frac{1}{I}\sum \limits_i [f_i(\bar p_i(I))+\frac{(D_i-\sum_k \bar p_i^k)^2}{2a(I-1)} \nonumber\\
&& - f_i(\bar p_i(I))- \frac{(D_i-\sum_k p_i^{k*})^2}{2a(I-1)}] \nonumber\\
& = & \frac{1}{I}\frac{1}{2a(I-1)}\sum \limits_i  [(D_i-\sum_k \bar p_i^k)^2-(D_i-\sum_k p_i^{k*})^2] \nonumber\\
& \le & \frac{1}{I}\frac{1}{2a(I-1)} \sum \limits_i [|D_i-\sum_k \bar p_i^k|^2+|D_i-\sum_k p_i^{k*}|^2] \nonumber\\
& \le & \frac{G^D\hat{F}}{a(I-1)} \le \epsilon
\eq
This completes the proof.
\end{proof}

\renewcommand{\theequation}{G.\arabic{equation}}
\renewcommand{\thetheorem}{G.\arabic{theorem}}
\section{Proof of Proposition \ref{Thm:prop-6}}
\begin{proof}
The KKT condition of the sharing game under competition is as follows.
\bsq
\label{eq:competition-KKT}
\bq
2c_i^kp_i^k+\frac{\sum \limits_k p_i^k-D_i}{a(I-1)}+\kappa+\sum \limits_l \pi_{il}\tau_l^{-}-\sum \limits_l \pi_{il}\tau_l^{+}=0\\
\sum \limits_i \sum \limits_k p_i^k =\sum \limits_i D_i\\
0 \le (\sum \limits_i \pi_{il}(D_i-\sum \limits_k p_i^k)+F_l) \perp \tau_l^{-} \ge 0\\
0 \le (-\sum \limits_i \pi_{il}(D_i-\sum \limits_k p_i^k)+F_l) \perp \tau_l^{+} \ge 0
\eq
\esq
If the resource prosumer $i$ owns is equally distributed to $M$ prosumers ($K$ is divisible by $M$), and satisfies that
\bq
 \sum \limits_{j=\frac{(m-1)K}{M}+1}^{\frac{mK}{M}} p_{i}^j- D_{i}^m & = & \frac{1}{M} (\sum \limits_{k=1}^K p_i^k- D_i), m=\{1,2,...,M\},\forall i \nonumber
\eq
Then it is easy to find that 
\bsq
\bq
2c_i^j p_i^j +\frac{\sum \limits_{j=\frac{(m-1)K}{M}+1}^{\frac{mK}{M}} p_{i}^j-D_i^m}{a(I-1)/M} +\kappa+\sum \limits_l \pi_{il}\tau_l^{-} \nonumber\\
-\sum \limits_l \pi_{il}\tau_l^{+}=0,m=\{1,2,...,M\},\forall i \\
\sum \limits_i \sum \limits_k p_i^k =\sum \limits_i \sum \limits_m D_i^m \\
0 \le (\sum \limits_i \pi_{il}(\sum \limits_m D_i^m- \sum \limits_k p_i^k)+F_l) \perp \tau_l^{-} \ge 0 \\
0 \le (-\sum \limits_i \pi_{il}(\sum \limits_m D_i^m- \sum \limits_k p_i^k)+F_l) \perp \tau_l^{+} \ge 0
\eq
\esq
which means $p_i^j$ is the solution of the following optimization problem
\bsq
\label{eq:central-2}
\bq
\mathop {\min} \limits_{p_i,\forall i} && \sum \limits_i \sum \limits_m \sum \limits_j c_i^j(p_i^j)^2+\sum \limits_i \sum \limits_m \frac{(D_i^m-\sum \limits_{j=\frac{(m-1)K}{M}+1}^{\frac{mK}{M}}p_i ^j)^2}{2a(I-1)/M} \nonumber\\
\rm{s.t.} && \sum \limits_i \sum \limits_k p_i^k =\sum \limits_i\sum \limits_m D_i^m \nonumber\\
&& -F_l \le \sum \limits_i \pi_{il}(\sum \limits_m D_i^m-\sum \limits_k p_i^k) \le F_l 
\eq
\esq

Denote
$$f(p)=\sum \limits_i \sum \limits_m \sum \limits_j c_i^j(p_i^j)^2$$
$$w(p)=\sum \limits_i \sum \limits_m \frac{(D_i^m-\sum \limits_{j=\frac{(m-1)K}{M}+1}^{\frac{mK}{M}}p_i ^j)^2}{2a(I-1)/M}$$
$$h(p)=\sum \limits_i \sum \limits_m \frac{(D_i^m-\sum \limits_{j=\frac{(m-1)K}{M}+1}^{\frac{mK}{M}}p_i ^j)^2}{2a(MI-1)}$$
Then $w(p)=\frac{(I-1)}{(MI-1)M}h(p)$. Suppose $p^{1*}$ the optimal solution before partition, and $p^{2*}$ the optimal solution after partition. If we have $f(p^{1*}) \le f(p^{2*})$. Due to optimality, we have
\bq
f(p^{1*})+w(p^{1*}) \le f(p^{2*})+w(p^{2*}) \nonumber
\eq
which means
\bq
w(p^{1*})-w(p^{2*}) & \le  & f(p^{2*})-f(p^{1*}) \nonumber\\
& \le & \frac{(MI-1)M}{I-1} [f(p^{2*})-f(p^{1*})] \nonumber
\eq
which means
\bq
h(p^{1*})-h(p^{2*}) & = &  \frac{I-1}{(MI-1)M} [w(p^{1*})-w(p^{2*})] \nonumber\\
& \le &  f(p^{2*})-f(p^{1*}) \nonumber
\eq
showing  that
\bq
f(p^{1*})+h(p^{1*}) \le f(p^{2*})+h(p^{2*}) \nonumber
\eq
which is contradict to the assumption that $p^{2*}$ is the optimal solution after partition. As a result, after partition, the total cost decreases.
\end{proof}

\end{document}